\documentclass[a4paper,12pt]{scrartcl}

\usepackage[utf8]{inputenc}
\usepackage{amsfonts}
\usepackage{amsmath}
\usepackage{amssymb}
\usepackage{graphicx}

% Comments
\usepackage{color}

% Layout
\usepackage{a4wide}
\setlength{\parindent}{0pt}

\usepackage{empheq}
\numberwithin{equation}{section}

% Theorems
\usepackage{amsthm}
\theoremstyle{plain}
\newtheorem{thm}{Theorem}[section]
\newtheorem{lem}[thm]{Lemma}
\newtheorem{cor}[thm]{Corollary}
\newtheorem{prop}[thm]{Proposition}
\newtheorem{obs}[thm]{Observation}
\newtheorem{fact}[thm]{Fact}

\newtheorem*{MainResult}{Main Result}
\newtheorem*{ConvexSets}{Theorem~\ref{thm:strong_convex_sets_euclidean}}

\theoremstyle{definition}

\newtheorem{exmpl}[thm]{Example}

\theoremstyle{remark}
\newtheorem{rem}[thm]{Remark}

% Spaces
\newcommand{\R}{\mathbb{R}}

% Operators and Relations
\usepackage{ifthen}
\newcommand{\optionalsubsuper}[3]{%
\ifthenelse{\equal{#2}{}}{%
  \ifthenelse{\equal{#3}{}}{%
    #1}{%
    #1^{#3}}
  }{%
  \ifthenelse{\equal{#3}{}}{%
    #1_{#2}}{%
    #1_{#2}^{#3}}}}

\newcommand{\nin}{\notin}
\newcommand{\union}{\cup}
\newcommand{\Union}{\bigcup}
\newcommand{\intersect}{\cap}

\newcommand{\into}{\hookrightarrow}
\newcommand{\onto}{\twoheadrightarrow}

\newcommand{\defeq}{\mathrel{\mathop{:}}=}
\newcommand{\eqdef}{=\mathrel{\mathop{:}}}

\newcommand{\abs}[1]{\left\lvert#1\right\rvert}

\newcommand{\Link}[1]{\optionalsubsuper{\operatorname{lk}}{#1}{}}

\newcommand{\CAT}[1]{\ensuremath{\textrm{CAT}(#1)}}

\newcommand{\overbar}[1]{\mkern 1.5mu\overline{\mkern-1.5mu#1\mkern-1.5mu}\mkern 1.5mu}

% Objects
\newcommand{\Space}{X}
\newcommand{\AltSpace}{Y}
\newcommand{\Point}{a}
\newcommand{\AltPoint}{b}
\newcommand{\YetAltPoint}{c}
\newcommand{\FourthPoint}{d}

\newcommand{\Ball}[1]{B_{#1}}
\newcommand{\CBall}[1]{\overbar{B}_{#1}}
\newcommand{\ClosedSet}{A}
\newcommand{\Apartment}{\Sigma}

\newcommand{\Chamber}{C}
\newcommand{\ChamberAtInfinity}{c}
\newcommand{\Retraction}[2]{\rho_{#1,#2}}
\newcommand{\Path}{\gamma}
\newcommand{\Cell}{\sigma}
\newcommand{\id}{\operatorname{id}}
\newcommand{\Proj}[1]{\operatorname{pr}_{#1}}
\newcommand{\Infty}[1]{{#1}^\infty}

\title{Local convexity in \CAT{\kappa}-spaces}
\author{Kai-Uwe Bux and Stefan Witzel}

\begin{document}

\maketitle

\begin{abstract}
\noindent
Heinrich Tietze has shown that for a closed connected subset of euclidean space being convex is a local property. We generalize this to \CAT{0}-spaces and locally compact \CAT{\kappa} spaces. As an application we give a construction of certain convex sets in euclidean buildings.
\end{abstract}

Let $\Space$ be a \CAT{0}-space, that is, a simply connected geodesic metric space with nonpositive curvature in the sense that triangles are at most as thick as their comparison triangles in euclidean space. We say that a closed subset $\ClosedSet \subseteq \Space$ is \emph{locally convex at $\Point \in \ClosedSet$} if there is an $\varepsilon > 0$ such that $\ClosedSet \intersect \Ball{}(x,\varepsilon)$ is convex in $\Ball{}(x,\varepsilon)$ and that $\ClosedSet$ is \emph{locally convex} if it is locally convex at every point. Our main result is

\begin{MainResult}
Let $\Space$ be a complete \CAT{\kappa}-space. Let $\ClosedSet \subseteq \Space$ be a closed and connected subset. If $\kappa > 0$ assume further that $X$ is locally compact and that the diameter of $\ClosedSet$ is at most $D_\kappa$ in the length metric $d_\ClosedSet$. If $\ClosedSet$ is locally convex then it is $D_\kappa$-convex.
\end{MainResult}

It is proven as Theorem~\ref{thm:local_convexity} (if $\Space$ is locally compact $\CAT{\kappa}$) and Theorem~\ref{thm:local_convexity_cat0} (if $\Space$ is \CAT{0}). In the case $\kappa > 0$ the assumption on the diameter of $\ClosedSet$ will be seen to be necessary, however the local compactness assumption is an artifact of the proof. In fact, the Main Result was proven without local compactness assumption by Carlos Ramos-Cuevas \cite{ramos-cuevas13}.

In the case where $\Space$ is a euclidean space, Theorem~\ref{thm:local_convexity_cat0} has been shown by Tietze in \cite{tietze28}, see also \cite{schoenberg48} and \cite[(5.2)]{klee51}. Papadopoulos~\cite[Theorem~8.3.3]{papadopoulos05} has a version of Theorem~\ref{thm:local_convexity_cat0} for locally compact Busemann spaces whose proof is very similar to that of Theorem~\ref{thm:local_convexity}. In \cite[§24~(b)]{gromov01} Gromov formulates a statement that implies Theorem~\ref{thm:local_convexity_cat0}. The method for proving Theorem~\ref{thm:local_convexity_cat0} was also used by Sahattchieve in \cite[Proposition~2.14]{sahattchieve12}.

\medskip

As an application we show a certain class of subsets of euclidean buildings to be convex. An alternative approach to this application, which is independent of the convexity criteria, has been suggested to the authors by Koen Struyve.

The description of the convex subsets of buildings uses the following technical condition. Let $\Apartment$ be a euclidean Coxeter complex and let $\Chamber \subseteq \Apartment$ be a chamber or $\Chamber \subseteq \Infty \Apartment$ be a chamber at infinity. A closed, convex set $\ClosedSet \subseteq \Apartment$ is said to \emph{satisfy the weak normal condition} if for every boundary point $\Point \in \partial \ClosedSet$ and every wall $H$ containing $\Point$, there is a vector at $\Point$ normal to $\ClosedSet$ that points into the halfspace of $H$ that does not contain $\Chamber$.
Recall also that if a building $\Space$ contains $\Apartment$ as an apartment, then there is a retraction of $\Retraction{\Apartment}{\Chamber} \colon \Space \to \Apartment$ of $\Space$ onto $\Apartment$ centered at $\Chamber$.

\begin{ConvexSets}
Let $\Space$ be a euclidean building, let $\Apartment \subseteq \Space$ be an
apartment and let $\Chamber \subseteq \Apartment$ be a chamber or $\Chamber
\subseteq \Infty\Apartment$ be a chamber at infinity. Let $\ClosedSet \subseteq
\Apartment$ be closed, convex and assume that $\ClosedSet \intersect
\Chamber \ne \emptyset$ (respectively $\Infty\ClosedSet \intersect \Chamber \ne
\emptyset$). Assume further that $\ClosedSet$ satisfies the weak normal condition.
Then $\tilde{\ClosedSet} \defeq
\Retraction{\Apartment}{\Chamber}^{-1}(\ClosedSet)$ is convex.
\end{ConvexSets}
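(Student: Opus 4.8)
The plan is to deduce the statement from the Main Result in its \CAT{0}-form (Theorem~\ref{thm:local_convexity_cat0}), using that every euclidean building is a complete \CAT{0}-space. Thus it suffices to check that $\tilde{\ClosedSet}=\Retraction{\Apartment}{\Chamber}^{-1}(\ClosedSet)$ is closed, connected and locally convex; the first property is immediate since $\Retraction{\Apartment}{\Chamber}$ is $1$-Lipschitz and $\ClosedSet$ is closed. For connectedness I would use that $\Retraction{\Apartment}{\Chamber}$ restricts to an isometry onto $\Apartment$ on every apartment $\Apartment'$ with $\Chamber\subseteq\Infty{\Apartment'}$ (respectively $\Chamber\subseteq\Apartment'$); then $\tilde{\ClosedSet}\intersect\Apartment'$ is an isometric copy of $\ClosedSet$, in particular connected, and any two such copies are joined through $\ClosedSet\subseteq\Apartment$, using that $\Infty\ClosedSet\intersect\Chamber\ne\emptyset$ forces $\ClosedSet$ to contain a ray towards $\Chamber$. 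Since $\ClosedSet$ is connected this yields connectedness of $\tilde{\ClosedSet}$; this part is routine building bookkeeping.

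The substance is local convexity, and here the geometry splits according to $\Point\defeq\Retraction{\Apartment}{\Chamber}(\tilde{\Point})$ for $\tilde{\Point}\in\tilde{\ClosedSet}$. If $\Point$ lies in the interior of $\ClosedSet$ in $\Apartment$, then because $\Retraction{\Apartment}{\Chamber}$ is $1$-Lipschitz with image in $\Apartment$, a whole ball about $\tilde{\Point}$ maps into $\ClosedSet$, so $\tilde{\ClosedSet}$ contains a neighbourhood of $\tilde{\Point}$ and is trivially locally convex there. The only work is at $\tilde{\Point}$ with $\Point\in\partial\ClosedSet$. Here I would localise: since euclidean buildings are locally conical, local convexity of $\tilde{\ClosedSet}$ at $\tilde{\Point}$ is equivalent to convexity of the tangent cone, and the latter is governed by the spherical building $L\defeq\operatorname{lk}(\tilde{\Point},\Space)$ together with the induced retraction $\bar\rho\colon L\to\operatorname{lk}(\Point,\Apartment)$ centred at the chamber $\bar\Chamber$ pointing towards $\Chamber$ (the initial direction of the sector, respectively geodesic, from $\tilde{\Point}$ to $\Chamber$). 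Under this reduction the link of $\ClosedSet$ at $\Point$ becomes a spherically convex set, and I must show that its $\bar\rho$-preimage is convex in $L$.

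This is exactly where the weak normal condition enters. Folding $L$ onto $\operatorname{lk}(\Point,\Apartment)$ along $\bar\rho$ reflects across the walls through $\tilde{\Point}$; the image of a geodesic is a piecewise–geodesic path that bends at each wall crossing, and convexity of the preimage amounts to showing this bent image never leaves the link of $\ClosedSet$. At a wall $H$ through $\Point$ the weak normal condition supplies an outward normal to $\ClosedSet$ pointing into the halfspace of $H$ away from $\Chamber$; dually this says that, near $\Point$, $\ClosedSet$ lies on the $\Chamber$-side of $H$, so that when the fold reflects a segment across $H$ it is pushed back towards $\ClosedSet$ rather than out of it. As a sanity check, when $\Space$ is a tree and $\Chamber$ an end, this mechanism makes $\tilde{\ClosedSet}$ a horoball about $\Chamber$, which is visibly convex. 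Running the argument over the finitely many walls met in $L$ should then give convexity of the preimage cone, hence local convexity of $\tilde{\ClosedSet}$.

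I expect the genuine obstacle to be this last step: controlling the folded image across several walls simultaneously and turning the one–wall picture into a clean estimate in the spherical building, together with making the tangent–cone reduction precise (and checking that the two cases, $\Chamber$ a chamber and $\Chamber$ a chamber at infinity, run in parallel). The interior case, closedness and connectedness are comparatively routine; notably the weak normal condition is used only at boundary points lying on walls, which matches its wall–by–wall formulation and explains why $\ClosedSet$ may be otherwise unconstrained.
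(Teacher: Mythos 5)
Your route --- reduce to Theorem~\ref{thm:local_convexity_cat0} and establish local convexity of $\tilde{\ClosedSet}$ by passing to tangent cones and inducting over spherical links --- is exactly the strategy of Section~\ref{sec:convex_sets_in_euclidean_buildings} of the paper, and it cannot prove this statement: the paper uses that strategy only for the \emph{weaker} Theorem~\ref{thm:convex_sets_euclidean}, which additionally assumes $\Space$ locally compact, $\ClosedSet$ polyhedral, and the \emph{strong} normal condition, and all three extra hypotheses are load-bearing. First, your claimed equivalence ``local convexity at $\tilde{\Point}$ $\Leftrightarrow$ convexity of the tangent cone'' is false without the cone condition \eqref{eq:cone_condition}, i.e.\ without polyhedrality: the complement of an open round disk in $\R^2$ has a convex tangent cone (a closed half-plane) at every boundary point, yet is nowhere locally convex there. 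The present statement must cover non-polyhedral sets such as a round disk, for which the paper's Proposition~\ref{prop:local_convexity_of_polyhedral_sets} is simply unavailable. Second, even granting polyhedrality, the induction in the links needs a direction $n_a \in \Link{\ClosedSet}a \intersect \Proj{\Link{}a}\Chamber$ --- that is precisely the strong normal condition --- both to verify the nonempty-intersection hypothesis one dimension down and to bound the length-metric diameter of $\Link{\tilde{\ClosedSet}}a$ by $\pi$; the weak normal condition provides a normal vector that need not point into $\ClosedSet$ at all, so both steps break. Third, $\pi$-convexity in the \CAT{1} links is obtained from Theorem~\ref{thm:local_convexity}, which in this paper requires local compactness --- not assumed here.

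The missing idea is the argument of Section~\ref{sec:alternative_proof}, which assembles your (correct) one-wall intuition along geodesics rather than in links. With $\rho \defeq \Retraction{\Apartment}{\Chamber}$, call a geodesic $\gamma$ \emph{ascending} if $t \mapsto d(\rho(\gamma(t)),\ClosedSet)$ is strictly increasing. When $\rho \circ \gamma$ crosses a wall, its outgoing tangent vector is $w = v + \lambda n$ with $\lambda \ge 0$, where $n$ is the wall normal pointing away from $\Chamber$; the weak normal condition, transferred to the thickened sets $\ClosedSet + \varepsilon$ by Observation~\ref{obs:key_observation} and Corollary~\ref{cor:normal_condition_preserved}, gives $(m,n) \ge 0$ for the vector $m$ pointing away from $\ClosedSet$, hence $(m,w) = (m,v) + \lambda(m,n) \ge 0$ and the ascending property propagates along any extension (Proposition~\ref{prop:ascending_goes_on}). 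Consequently a geodesic with both endpoints in $\tilde{\ClosedSet}$ can never leave $\tilde{\ClosedSet}$: otherwise it would be ascending on some subinterval, hence ascending up to its terminal point, contradicting $\rho(\gamma(l)) \in \ClosedSet$. This argument never mentions links, polyhedrality, or local compactness, which is exactly why it proves the stronger statement; the obstacle you correctly identified --- controlling the folded image across several walls simultaneously --- is dissolved rather than solved by tracking the single scalar quantity $d(\rho(\gamma(t)),\ClosedSet)$ globally.
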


The paper is organized as follows. The convexity criteria are proved in Section~\ref{sec:local_convexity}.
In Section~\ref{sec:convex_sets_in_euclidean_buildings} we use them to prove a weaker version of Theorem~\ref{thm:strong_convex_sets_euclidean}. Finally, in Section~\ref{sec:alternative_proof} we give the alternative proof of Theorem~\ref{thm:strong_convex_sets_euclidean} that is independent of Sektion~\ref{sec:local_convexity}.

\medskip

We want to thank Peter Abramenko for helpful discussions about questions that
lead to this article and Koen Struyve for the remark that led to
Section~\ref{sec:alternative_proof}. The second author thanks the University of
Virginia for the hospitality he enjoyed during the research for this article
and greatfully acknowledges financial support by the DFG.

\section{Convex sets in \CAT{\kappa}-spaces}
\label{sec:local_convexity}

Let $(\Space,d)$ be a metric space. We will use the following definitions which are taken from \cite{brihae}.

A map $\Path \colon [a,b] \to \Space$ is a \emph{geodesic} if it is an isometric embedding. It is a \emph{local geodesic at~$\Path(t)$} if there is an $\varepsilon > 0$ such that $\Path$ preserves distances on $\Ball{}(t,\varepsilon)$ and it is a \emph{local geodesic} if it is a local geodesic at every point.

A map $f \colon \Space \to \AltSpace$ is \emph{locally an isometric embedding} if for every $\Point \in \Space$ there is a $\varepsilon > 0$ such that the restriction of $f$ to $\Ball{}(\Point,\varepsilon)$ is an isometric embedding, i.e.\ an isometry onto its image.

The space $\Space$ is \emph{(uniquely) geodesic} if for any two points there is a (unique) geodesic joining them. It is a \emph{length space} if the distance between two points is the infimum over the lengths of paths that join them. It is \emph{proper} if its closed bounded sets are compact. It is a \CAT{\kappa}-space if triangles are at most as thick as their comparison triangles in a space of curvature $\kappa$, see \cite[Definition~II.1.1]{brihae}. Recall that $D_\kappa = \pi/\sqrt{\kappa}$ if $\kappa > 0$ and $D_\kappa = \infty$ otherwise.

\medskip

We sketch the proof of the convexity criterion for locally compact \CAT{\kappa} spaces, see Figure~\ref{fig:argument_sketch}. The crucial ingredient is the observation that being a geodesic is a local property, see \cite[Proposition~II.1.4~(2)]{brihae}:

\begin{prop}
\label{prop:local_geodesic}
Let $\Space$ be a \CAT{\kappa}-space. A path in $\Space$ of length at most $D_\kappa$ is a geodesic if and only if it is a local geodesic.
\end{prop}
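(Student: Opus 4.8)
The plan is to prove the non-trivial implication, that a local geodesic $\gamma\colon[0,L]\to\Space$ of length $L\le D_\kappa$ (parametrized by arc length) is a geodesic; the converse is immediate, since an isometric embedding is in particular locally distance-preserving. The strategy is a connectedness argument on the parameter interval. Set
\[
S=\{t\in[0,L] : \gamma|_{[0,t]}\text{ is a geodesic}\}.
\]
Since $\gamma$ is a local geodesic at $0$, the set $S$ contains a neighbourhood of $0$; and $S$ is closed, because $t\mapsto d(\gamma(0),\gamma(t))$ is continuous and membership reads $d(\gamma(0),\gamma(t))=t$. Let $t_0=\sup S\in S$, so that $\gamma|_{[0,t_0]}$ is a genuine geodesic. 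I would show $t_0=L$; assuming instead $t_0<L$, I would extend the geodesic slightly past $t_0$ and contradict maximality. Note first that $t_0<L\le D_\kappa$ forces the strict inequality $t_0<D_\kappa$, which will keep all triangles appearing below of perimeter strictly less than $2D_\kappa$.

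The geometric heart of the argument is to compute the Alexandrov angle at the point $y=\gamma(t_0)$. Because $\gamma$ is a local geodesic at $t_0$, there is $\delta>0$ with $\gamma|_{[t_0-\delta,t_0+\delta]}$ a geodesic; fix small $a,b>0$ and put $x=\gamma(t_0-a)$, $z=\gamma(t_0+b)$ and $p=\gamma(0)$. The direction at $y$ pointing back toward $p$ agrees with the direction toward $x$, since $x$ lies on the geodesic $\gamma|_{[0,t_0]}$ and hence $[y,x]$ is an initial segment of $[y,p]$; and since $x,y,z$ all lie on the single geodesic $\gamma|_{[t_0-\delta,t_0+\delta]}$ with $y$ between them, the two directions at $y$ are opposite. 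Hence $\angle_y(p,z)=\angle_y(x,z)=\pi$.

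Finally I would feed this into the \CAT{\kappa} comparison inequality. The geodesic triangle $\Delta(p,y,z)$ has perimeter at most $2(t_0+b)<2D_\kappa$ once $b$ is chosen small, so the comparison angle satisfies $\overline{\angle}_y(p,z)\ge\angle_y(p,z)=\pi$, whence $\overline{\angle}_y(p,z)=\pi$. A straight angle at the vertex $\bar y$ of the comparison triangle in $M_\kappa^2$ means that triangle is degenerate, i.e.\ $d(p,z)=d(p,y)+d(y,z)=t_0+b$. Thus $\gamma|_{[0,t_0+b]}$ is a path of length $t_0+b$ joining points at distance $t_0+b$, and therefore a geodesic, so $t_0+b\in S$ — contradicting $t_0=\sup S$. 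I expect the genuinely delicate points to be bookkeeping rather than substance: securing the strict inequality $t_0<D_\kappa$ so that comparison triangles remain available up to and including the endpoint length $L=D_\kappa$ (this is exactly where the hypothesis ``length at most $D_\kappa$'' is used sharply), and justifying that the backward direction toward $p$ coincides with the direction toward $x$, which relies on $\gamma|_{[0,t_0]}$ being already known to be a geodesic so that $x$ really lies on $[p,y]$.
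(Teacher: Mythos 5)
Your proposal is correct. Note, however, that the paper does not prove this proposition at all: it is quoted as a known result with a reference to Bridson--Haefliger, Proposition~II.1.4~(2), so there is no in-paper proof to compare against. What you have written is essentially the standard argument given there: a continuity/bootstrap argument on $S=\{t : \gamma|_{[0,t]}\text{ is a geodesic}\}$, the observation that the angle at an interior point of a geodesic is $\pi$, and the \CAT{\kappa} angle-comparison inequality forcing the comparison triangle at $y=\gamma(t_0)$ to degenerate, which extends the geodesic past $t_0$. The bookkeeping you flagged is handled correctly: taking $b<\min\{\delta,\,L-t_0\}$ keeps $t_0+b<L\le D_\kappa$, so the geodesic $[p,z]$ exists, the triangle $\Delta(p,y,z)$ has perimeter strictly less than $2D_\kappa$, and the degenerate comparison triangle in $M_\kappa^2$ indeed yields $d(p,z)=t_0+b$; the endpoint case $t=L=D_\kappa$ is then recovered from closedness of $S$ alone, which only needs continuity of $t\mapsto d(\gamma(0),\gamma(t))$. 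One small remark: you do not actually need uniqueness of geodesics to identify the direction toward $p$ with the direction toward $x$ --- you may simply take the sides of the comparison triangle to be the segments $\gamma|_{[0,t_0]}$ and $\gamma|_{[t_0,t_0+b]}$ themselves, since the angle and comparison inequalities apply to whichever sides one chooses.
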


\begin{figure}[htb]
\begin{center}
\includegraphics{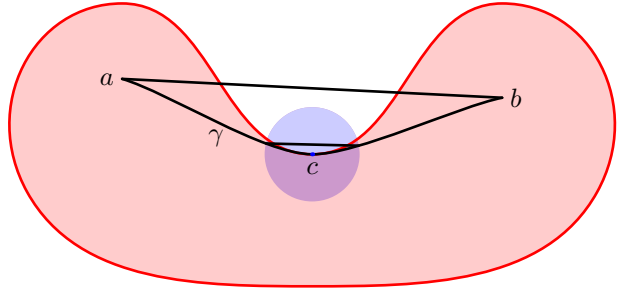}
\end{center}
\caption{The idea of proof of Theorem~\ref{thm:local_convexity}.}
\label{fig:argument_sketch}
\end{figure}

Let $\Space$ be a \CAT{0}-space. Let $\ClosedSet \subseteq \Space$ be a closed subset and assume that $\ClosedSet$ is not convex so that there are points $\Point$ and $\AltPoint$ such that the geodesic $[\Point,\AltPoint]$ is not fully contained in $\ClosedSet$. Then the path $\Path$ that is shortest among the paths from $\Point$ to $\AltPoint$ that are fully contained in $\ClosedSet$ cannot be a geodesic. Hence there has to be a point $\YetAltPoint$ of $\Path$ at which it is not a local geodesic by Proposition~\ref{prop:local_geodesic}. But then $\ClosedSet$ cannot be locally convex at $\YetAltPoint$. Of course we have to ascertain that a shortest path $\Path$ as above exists. This will follow from the Hopf--Rinow Theorem, \cite[Proposition~I.3.7~(2)]{brihae}:

\begin{prop}
\label{prop:hopf-rinow}
A complete, locally compact length space is a proper geodesic space.
\end{prop}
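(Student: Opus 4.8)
The plan is to establish the two conclusions in turn: first that $\Space$ is proper, and then that properness together with the length-space hypothesis forces $\Space$ to be geodesic. For properness it suffices to fix a basepoint $\SpecialPoint$ and prove that every closed ball $\CBall{}(\SpecialPoint,r)$ is compact, since an arbitrary closed bounded set is a closed subset of some such ball. I would therefore study $\rho \defeq \sup\{r \ge 0 : \CBall{}(\SpecialPoint,r) \text{ is compact}\}$. Local compactness makes $\rho > 0$, and the collection of good radii is an interval because a closed ball of smaller radius is a closed subset of a compact one. The goal is to show $\rho = \infty$.

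The engine behind both remaining steps is a single interpolation observation that uses the length-space hypothesis: given any $y$ and any $s < d(\SpecialPoint,y)$, choosing a path from $\SpecialPoint$ to $y$ of length less than $d(\SpecialPoint,y)+\delta$ and taking the point $z$ at arc length $s$ along it yields $d(\SpecialPoint,z) \le s$ while $d(z,y) < d(\SpecialPoint,y) - s + \delta$. First I would show $\CBall{}(\SpecialPoint,\rho)$ is compact: it is complete as a closed subset of a complete space, and it is totally bounded because, for $r < \rho$ close to $\rho$, a finite $\varepsilon$-net of the compact ball $\CBall{}(\SpecialPoint,r)$ becomes a net of $\CBall{}(\SpecialPoint,\rho)$ once the interpolation observation is used to push each point of the larger ball to within $\rho - r$ of the smaller one. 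Next, assuming $\rho < \infty$, I would cover the now-compact ball $\CBall{}(\SpecialPoint,\rho)$ by finitely many balls $\Ball{}(x_i,\varepsilon_i/2)$ with each $\CBall{}(x_i,\varepsilon_i)$ compact, set $\varepsilon = \tfrac12\min_i \varepsilon_i$, and use interpolation again to send any point of $\CBall{}(\SpecialPoint,\rho+\varepsilon)$ to a nearby point of $\CBall{}(\SpecialPoint,\rho)$, hence into some $\CBall{}(x_i,\varepsilon_i)$. This exhibits $\CBall{}(\SpecialPoint,\rho+\varepsilon)$ as a closed subset of a finite union of compact sets, so it is compact, contradicting the definition of $\rho$. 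Therefore $\rho = \infty$ and $\Space$ is proper.

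For the geodesic conclusion I would take two points $\Point,\AltPoint$ and, using the length-space property, a sequence of paths $\Path_n$ from $\Point$ to $\AltPoint$ whose lengths tend to $d(\Point,\AltPoint)$. Reparametrizing each proportionally to arc length on $[0,1]$ makes the family uniformly Lipschitz, hence equicontinuous, and all paths of sufficiently large index lie in the compact ball $\CBall{}(\Point, d(\Point,\AltPoint)+1)$. Arzel\`a--Ascoli then yields a uniformly convergent subsequence with limit $\Path$; since length is lower semicontinuous under uniform convergence, $\operatorname{length}(\Path) \le d(\Point,\AltPoint)$, while the reverse inequality is automatic, so $\Path$ realizes the distance and is, after arc-length reparametrization, the desired geodesic.

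I expect the main obstacle to be the passage from $\rho$ to $\rho+\varepsilon$ in the properness argument: one must extract a radius $\varepsilon$ that works uniformly over the compact ball $\CBall{}(\SpecialPoint,\rho)$ and then reconcile this with the error terms coming from the interpolation observation, so the constants in the covering and in the choice of $\delta$ must be tracked carefully. By contrast, the compactness of $\CBall{}(\SpecialPoint,\rho)$ and the Arzel\`a--Ascoli step are routine once the interpolation observation is in hand.
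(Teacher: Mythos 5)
The paper offers no proof of Proposition~\ref{prop:hopf-rinow} at all: it is quoted as a known result from \cite[Proposition~I.3.7~(2)]{brihae}, so the only thing to compare your argument against is the standard proof in that reference. Your proposal is correct and is essentially that standard Hopf--Rinow argument --- properness via the critical radius $\rho = \sup\{r : \CBall{}(\SpecialPoint,r) \text{ compact}\}$ together with the length-space interpolation trick (first to show $\CBall{}(\SpecialPoint,\rho)$ is totally bounded, then to push compactness past $\rho$ and force $\rho = \infty$), followed by Arzel\`a--Ascoli and lower semicontinuity of length for the geodesic conclusion; the constant-tracking you flag in the covering step is real but harmless, since $\delta>0$ is arbitrary and the cover is finite, so one may let $\delta \to 0$ and pass to a fixed index $i$ to land in $\CBall{}(x_i,\varepsilon_i)$.
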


If $\Space$ is a geodesic metric space and $\ClosedSet \subseteq \Space$ is a closed subset, we let $d_\ClosedSet$ denote the length metric on $\ClosedSet$ induced by $d$. That is
\[
d_A(\Point,\AltPoint) = \inf_{\Path} l(\Path)
\]
where the infimum is taken over rectifiable paths $\gamma$ in $\ClosedSet$ that join $\Point$ to $\AltPoint$.

In general, passage to the length metric does not preserve desirable properties as the following example shows.

\begin{exmpl}
Let $\Space \defeq \R^3$ and let $K \subseteq \Space$ be a Koch snowflake (see
\cite[Section~2.4]{peijursau04}). Let $\Point$ be a point outside the plane
spanned by $K$ let $\ClosedSet$ be the cone over $K$ with cone point~$\Point$
(that is, $\ClosedSet = \Union_{k \in K} [\Point,k]$). Clearly $\ClosedSet$ is compact with
respect to $d$. But if $\AltPoint, \AltPoint'$ are two distinct points in $K$,
the shortest rectifiable path in $\ClosedSet$ connecting them is the path from
$\AltPoint$ to $\Point$ composed with the path from $\Point$ to $\AltPoint'$.
Hence every segment of the form $(\Point,k]$ with $k \in K$ is open in
$(\ClosedSet,d_\ClosedSet)$. Thus $\ClosedSet$ is not locally compact.
\end{exmpl}

However it turns out that the assumption that $\ClosedSet$ be locally convex suffices to avoid these cases:

\begin{obs}
\label{obs:inclusion_local_isometry}
Let $(\Space,d)$ be a metric space and let $\ClosedSet \subseteq \Space$ be locally convex. The inclusion $(\ClosedSet,d_\ClosedSet) \into (\Space,d)$ is a locally an isometric embedding.
\end{obs}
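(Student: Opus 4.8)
The plan is to establish the two halves of an isometry separately: the inequality $d(\Point,\AltPoint) \le d_\ClosedSet(\Point,\AltPoint)$ holds globally and needs nothing, while the reverse inequality $d_\ClosedSet(\Point,\AltPoint) \le d(\Point,\AltPoint)$ holds on a small neighborhood and is where local convexity enters. Showing both on a fixed neighborhood of each point is precisely what it means for the inclusion to be locally an isometric embedding.

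First I would record the trivial direction, valid for any metric space: every rectifiable path in $\ClosedSet$ is a rectifiable path in $\Space$ of the same length, and the length of any path from $\Point$ to $\AltPoint$ is bounded below by $d(\Point,\AltPoint)$; taking the infimum over paths in $\ClosedSet$ gives $d(\Point,\AltPoint) \le d_\ClosedSet(\Point,\AltPoint)$ for all $\Point,\AltPoint \in \ClosedSet$. A useful consequence, which I will exploit repeatedly, is that every $d_\ClosedSet$-ball about a point is contained in the $d$-ball of the same radius, so $d_\ClosedSet$-closeness implies $d$-closeness.

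Next I would fix $\Point \in \ClosedSet$ and invoke local convexity to pick $\varepsilon > 0$ with $\ClosedSet \intersect \Ball{}(\Point,\varepsilon)$ convex in $\Ball{}(\Point,\varepsilon)$, shrinking $\varepsilon$ if needed so that $\varepsilon < D_\kappa/2$; then $\Ball{}(\Point,\varepsilon)$ is a convex, uniquely geodesic subspace of $\Space$. I claim the inclusion restricts to an isometric embedding on the $d_\ClosedSet$-ball of radius $\varepsilon$ about $\Point$. Indeed, for $\AltPoint,\YetAltPoint$ in this ball the previous paragraph gives $d(\Point,\AltPoint), d(\Point,\YetAltPoint) < \varepsilon$, so both lie in $\ClosedSet \intersect \Ball{}(\Point,\varepsilon)$; by convexity of the ball the geodesic $[\AltPoint,\YetAltPoint]$ stays inside $\Ball{}(\Point,\varepsilon)$, and it is the unique geodesic between its endpoints, hence also the geodesic computed inside the subspace. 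Convexity of $\ClosedSet \intersect \Ball{}(\Point,\varepsilon)$ in $\Ball{}(\Point,\varepsilon)$ then forces $[\AltPoint,\YetAltPoint] \subseteq \ClosedSet$. This segment is a path in $\ClosedSet$ of length $d(\AltPoint,\YetAltPoint)$, so $d_\ClosedSet(\AltPoint,\YetAltPoint) \le d(\AltPoint,\YetAltPoint)$; combined with the global inequality this yields $d_\ClosedSet(\AltPoint,\YetAltPoint) = d(\AltPoint,\YetAltPoint)$, exactly the isometric embedding property near $\Point$.

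The main obstacle is the bookkeeping that keeps the comparison geodesic inside the region where local convexity is available. One must first convert $d_\ClosedSet$-smallness into $d$-smallness through the global inequality, and then use convexity and unique geodesicity of small balls in $\Space$ to guarantee that $[\AltPoint,\YetAltPoint] \subseteq \Ball{}(\Point,\varepsilon)$ and that this ambient geodesic coincides with the one appearing in the definition of convexity in the ball, before local convexity can be applied. The constraint $\varepsilon < D_\kappa/2$ is what secures both of these facts; in the \CAT{0} case it is vacuous since $D_\kappa = \infty$, and balls are always convex and geodesics unique.
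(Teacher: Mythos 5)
Your overall two\dash{}inequality structure is sound, and your care in passing from the $d_\ClosedSet$-ball to the $d$-ball (via $d \le d_\ClosedSet$) is actually more explicit than the paper's one-line proof. But there is a genuine gap: the Observation is stated for an \emph{arbitrary} metric space $(\Space,d)$, while your argument leans on \CAT{\kappa}-specific facts that are simply not available. You shrink $\varepsilon$ below $D_\kappa/2$, assert that $\Ball{}(\Point,\varepsilon)$ is a convex, uniquely geodesic subspace, and produce the ambient geodesic $[\AltPoint,\YetAltPoint]$ --- but in a general metric space $D_\kappa$ is undefined, a geodesic from $\AltPoint$ to $\YetAltPoint$ need not exist at all, and even when it does, balls need not be convex and geodesics need not be unique. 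So as written your proof establishes only the special case where $\Space$ is \CAT{\kappa}. This loss of generality is not harmless for the paper's logic: Corollary~\ref{cor:closed_locally_compact_nice_in_length_metric} applies the Observation to a complete, locally compact metric space with no curvature hypothesis whatsoever.

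The detour that created the problem is producing the comparison geodesic in the ambient space $\Space$ and then matching it with the geodesic of the subspace $\Ball{}(\Point,\varepsilon)$; it is exactly this identification that forced you to invoke convexity of balls and unique geodesicity. That step is unnecessary. In the paper's usage, ``$\ClosedSet \intersect \Ball{}(\Point,\varepsilon)$ is convex in $\Ball{}(\Point,\varepsilon)$'' already means that any two points $\AltPoint,\YetAltPoint \in \ClosedSet \intersect \Ball{}(\Point,\varepsilon)$ are joined by a geodesic \emph{of the subspace} $\Ball{}(\Point,\varepsilon)$ that lies in $\ClosedSet$ (compare the proof of Theorem~\ref{thm:local_convexity}: ``\dots\ $d$ and $d_\ClosedSet$ coincide on $\ClosedSet \intersect U$ and for any two points there is a geodesic that joins them''). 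Such a geodesic is, by definition of the induced metric on the ball, a path in $\ClosedSet$ of length exactly $d(\AltPoint,\YetAltPoint)$, so $d_\ClosedSet(\AltPoint,\YetAltPoint) \le d(\AltPoint,\YetAltPoint)$ follows at once; combined with your trivial reverse inequality, the two metrics coincide on $\ClosedSet \intersect \Ball{}(\Point,\varepsilon)$, hence on the smaller $d_\ClosedSet$-ball. No curvature bound, no existence or uniqueness of ambient geodesics, and no convexity of balls in $\Space$ is needed; if you rewrite your second paragraph to use the subspace geodesic supplied directly by local convexity, your argument goes through in the stated generality.
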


\begin{proof}
Let $\Point \in \ClosedSet$ be arbitrary and let $\varepsilon > 0$ be such that $\ClosedSet \intersect \Ball{\Space}(\Point,\varepsilon)$ is convex in $\Ball{\Space}(\Point,\varepsilon)$. Then the restrictions to $\ClosedSet \intersect \Ball{\Space}(\Point,\varepsilon)$ of $d$ and $d_\ClosedSet$ coincide.
\end{proof}

\begin{cor}
\label{cor:closed_locally_compact_nice_in_length_metric}
Let $(\Space,d)$ be a complete, locally compact metric space and let $\ClosedSet \subseteq \Space$ be closed and locally convex. Then $\ClosedSet$ is complete and locally compact with respect to the length metric $d_\ClosedSet$. If $\ClosedSet$ is connected, then it is connected in the length metric.
\end{cor}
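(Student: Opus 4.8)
The plan is to reduce everything to Observation~\ref{obs:inclusion_local_isometry}, which provides, for each $\Point \in \ClosedSet$, an $\varepsilon > 0$ such that $d$ and $d_\ClosedSet$ agree on $\ClosedSet \intersect \Ball{\Space}(\Point,\varepsilon)$. I would first record the elementary inequality $d \le d_\ClosedSet$ (the length of a path in $\ClosedSet$ is at least the $d$-distance of its endpoints), so that the identity map $(\ClosedSet,d_\ClosedSet) \to (\ClosedSet,d)$ is $1$-Lipschitz, hence continuous, and every $d_\ClosedSet$-ball is contained in the corresponding $d$-ball. Combining this inequality with the local coincidence of the metrics shows that for $\delta < \varepsilon$ one has $\Ball{d_\ClosedSet}(\Point,\delta) = \Ball{\Space}(\Point,\delta) \intersect \ClosedSet$; thus $d$ and $d_\ClosedSet$ have the same small balls around every point and therefore induce the \emph{same topology} on $\ClosedSet$. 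Two of the three assertions then become topological and transfer at once; only completeness, being a metric notion, needs a separate argument.

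For local compactness I would argue directly. Given $\Point \in \ClosedSet$, pick $r>0$ with $\CBall{\Space}(\Point,r)$ compact (local compactness of $\Space$) and $\varepsilon$ as above, and set $\delta \defeq \min(r,\varepsilon)/2$. The set $S \defeq \CBall{\Space}(\Point,\delta) \intersect \ClosedSet$ is $d$-closed and contained in the compact ball $\CBall{\Space}(\Point,r)$, hence $d$-compact; since $S \subseteq \ClosedSet \intersect \Ball{\Space}(\Point,\varepsilon)$, where the two metrics coincide, $S$ is also $d_\ClosedSet$-compact. Finally $d \le d_\ClosedSet$ gives $\Ball{d_\ClosedSet}(\Point,\delta) \subseteq S$, so $S$ is a compact $d_\ClosedSet$-neighborhood of $\Point$. (Alternatively, a closed subset of the locally compact space $\Space$ is locally compact, and local compactness is topological.)

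For completeness let $(x_n)$ be $d_\ClosedSet$-Cauchy. By $d \le d_\ClosedSet$ it is $d$-Cauchy, hence $d$-convergent to some $x \in \Space$ by completeness of $\Space$, and $x \in \ClosedSet$ because $\ClosedSet$ is closed. Choosing $\varepsilon$ as above for the point $x$, all but finitely many $x_n$ lie in $\ClosedSet \intersect \Ball{\Space}(x,\varepsilon)$, where $d_\ClosedSet = d$; hence $d_\ClosedSet(x_n,x) = d(x_n,x) \to 0$ and $(x_n)$ converges in $d_\ClosedSet$. Note that this uses the Cauchy hypothesis only through its $d$-consequence, which reflects the fact that the two metrics share the same convergent sequences.

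For connectedness I would observe that it is immediate from the equality of topologies: if $\ClosedSet$ is connected in $d$ it is connected in $d_\ClosedSet$. If one prefers a hands-on argument, each $y \in \ClosedSet \intersect \Ball{\Space}(\Point,\varepsilon)$ is joined to $\Point$ by the $\Space$-geodesic $[\Point,y]$, which lies in $\ClosedSet$ by convexity and is a $d_\ClosedSet$-path; hence the $d_\ClosedSet$-path-components are open in the $d$-topology, their complements are unions of such components and thus also $d$-open, and connectedness of $(\ClosedSet,d)$ leaves exactly one. The whole argument is routine bookkeeping; the one conceptual point, and the step I would be most careful about, is the passage from the \emph{coarser} metric $d$ to the \emph{finer} metric $d_\ClosedSet$ needed for connectedness, a direction not supplied by the inequality $d \le d_\ClosedSet$ alone and available precisely because Observation~\ref{obs:inclusion_local_isometry} makes the two metrics agree on a neighborhood of every point.
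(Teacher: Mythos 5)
Your proposal is correct and takes essentially the same route as the paper: everything is funneled through Observation~\ref{obs:inclusion_local_isometry}, the properties of $(\ClosedSet,d)$ are transferred to $(\ClosedSet,d_\ClosedSet)$ one by one, and your ``hands-on'' clopen-components argument for connectedness is exactly the paper's. If anything, your write-up is more careful than the paper's at one point: the paper dismisses completeness with the slogan that ``completeness is a local property'' (which, taken literally, is false for metric spaces in general---think of an open interval), whereas you supply the argument that actually works, namely that $d \le d_\ClosedSet$ makes a $d_\ClosedSet$-Cauchy sequence $d$-Cauchy, its $d$-limit lies in the closed set $\ClosedSet$, and the local coincidence of the two metrics upgrades $d$-convergence to $d_\ClosedSet$-convergence.
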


\begin{proof}
Clearly $(\ClosedSet,d)$ is complete and locally compact.
Since completeness and local compactness are local properties, it follows from Observation~\ref{obs:inclusion_local_isometry}, that $(\ClosedSet,d_\ClosedSet)$ is complete and locally compact.

The set of points that can be reached by rectifiable paths from a given point is open and closed. So if $(\ClosedSet,d)$ were not connected by rectifiable paths, it would not be connected.
\end{proof}

The argument for connectedness was already used in \cite[Lemma~3]{schoenberg48} and \cite[(5.1)]{klee51}. 

We can now prove the first convexity criterion.

\begin{thm}
\label{thm:local_convexity}
Let $\Space$ be a complete, locally compact \CAT{\kappa}-space. Let $\ClosedSet \subseteq \Space$ be a closed and connected subset. If $\kappa > 0$ assume further that the diameter of $\ClosedSet$ is at most $D_\kappa$ in the length metric $d_\ClosedSet$. If $\ClosedSet$ is locally convex then it is $D_\kappa$-convex.
\end{thm}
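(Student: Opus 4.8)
The plan is to follow the sketch the authors have already laid out and supply the missing existence and local-geodesic arguments. Suppose $\ClosedSet$ is not $D_\kappa$-convex. Then there are points $\Point,\AltPoint \in \ClosedSet$ with $d_\ClosedSet(\Point,\AltPoint) \le D_\kappa$ (in the $\kappa>0$ case this is guaranteed by the diameter hypothesis) whose geodesic $[\Point,\AltPoint]$ in $\Space$ is not contained in $\ClosedSet$. I would first produce a shortest path \emph{inside} $\ClosedSet$ joining $\Point$ to $\AltPoint$. By Corollary~\ref{cor:closed_locally_compact_nice_in_length_metric} the space $(\ClosedSet,d_\ClosedSet)$ is complete, locally compact, and connected in the length metric, so by the Hopf--Rinow Theorem (Proposition~\ref{prop:hopf-rinow}) it is a proper geodesic space; hence the infimum defining $d_\ClosedSet(\Point,\AltPoint)$ is attained by some path $\Path$, which is a $d_\ClosedSet$-geodesic.

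The second step is to argue that $\Path$ is a \emph{local} geodesic in $\Space$ of length at most $D_\kappa$. Local convexity is exactly the tool here: at any point $\YetAltPoint = \Path(t)$ pick $\varepsilon>0$ with $\ClosedSet \intersect \Ball{}(\YetAltPoint,\varepsilon)$ convex in the ball; by Observation~\ref{obs:inclusion_local_isometry} the metrics $d$ and $d_\ClosedSet$ agree there, so a subpath of $\Path$ staying in that ball is a $d_\ClosedSet$-geodesic between nearby points, hence a $d$-geodesic, hence $\Path$ preserves $d$-distances near $t$. Thus $\Path$ is a local geodesic in $\Space$ at every point, and its length is $d_\ClosedSet(\Point,\AltPoint) \le D_\kappa$.

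Now Proposition~\ref{prop:local_geodesic} applies: a local geodesic of length at most $D_\kappa$ in a \CAT{\kappa}-space is a genuine geodesic. Therefore $\Path$ is the (unique, since $\Space$ is uniquely geodesic up to distance $D_\kappa$) geodesic $[\Point,\AltPoint]$ in $\Space$. But $\Path$ lies in $\ClosedSet$ by construction, contradicting the assumption that $[\Point,\AltPoint]$ leaves $\ClosedSet$. This contradiction shows $\ClosedSet$ is $D_\kappa$-convex.

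The main obstacle I anticipate is not any single estimate but marshalling the hypotheses so that each black-box theorem applies cleanly at the right moment. Concretely, one must check that the length metric $d_\ClosedSet$ is finite on the relevant pairs and bounded by $D_\kappa$ when $\kappa>0$ (this is where the diameter assumption is used and where $\kappa>0$ genuinely differs from the \CAT{0} case, since Proposition~\ref{prop:local_geodesic} needs the length bound), and that $\Path$ regarded in $(\Space,d)$ really does have $d$-length equal to its $d_\ClosedSet$-length. The latter is precisely the content of Observation~\ref{obs:inclusion_local_isometry}: local isometry of the inclusion guarantees that lengths of paths are computed identically in the two metrics, so passing a $d_\ClosedSet$-geodesic to a $d$-local-geodesic is legitimate. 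Once these compatibility points are pinned down, the contradiction is immediate.
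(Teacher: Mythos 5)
Your proposal is correct and takes essentially the same route as the paper's own proof: Corollary~\ref{cor:closed_locally_compact_nice_in_length_metric} together with the Hopf--Rinow theorem (Proposition~\ref{prop:hopf-rinow}) to obtain a $d_\ClosedSet$-geodesic in $\ClosedSet$, Observation~\ref{obs:inclusion_local_isometry} to identify $d$ and $d_\ClosedSet$ on small convex neighborhoods, and Proposition~\ref{prop:local_geodesic} to pass from local geodesic to genuine geodesic. The only difference is organizational: you verify directly that the $d_\ClosedSet$-geodesic preserves $d$-distances locally, whereas the paper supposes it is not a $d$-geodesic and derives a contradiction by splicing in a strictly shorter path inside a convex neighborhood --- the same ingredients deployed in the same order.
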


\begin{proof}
By Corollary~\ref{cor:closed_locally_compact_nice_in_length_metric}, $(\ClosedSet,d_\ClosedSet)$ is a complete locally compact length space, so it is a proper geodesic space by Proposition~\ref{prop:hopf-rinow}. Since $\ClosedSet$ is assumed to be connected, any two points have finite distance.

Let $\Point,\AltPoint \in \ClosedSet$ have distance $d(\Point,\AltPoint) < D_\kappa$ and let $\Path \colon [0,l] \to \ClosedSet$ be a geodesic joining them in $(\ClosedSet,d_\ClosedSet)$. By assumption the length of $\Path$ is at most $D_\kappa$.

Suppose that $\Path$ is not a geodesic in $(\ClosedSet,d)$. Then by
Proposition~\ref{prop:local_geodesic} there is a $t \in [0,l]$ such that $\Path$
is not a local geodesic at $\Path(t)$. But by assumption $\ClosedSet$ is locally
convex at $\Path(t)$ so there is an open neighborhood $U$ such that $\ClosedSet
\intersect U$ is convex in $U$. That is, $d$ and $d_\ClosedSet$ coincide on
$\ClosedSet \intersect U$ and for any two points there is a geodesic that joins
them. Since $\Path$ is not a local geodesic at $\Path(t)$, there are $t',t'' \in
[0,l]$ such that $\gamma([t',t'']) \subseteq U$ and
$d_\ClosedSet(\gamma(t'),\gamma(t'')) < \abs{t' - t''}$. Replacing
$\Path|_{[t',t'']}$ by the geodesic from $\gamma(t')$ to $\gamma(t'')$ yields a
path from $\Point$ to $\AltPoint$ in $(\ClosedSet,d_\ClosedSet)$ that is
strictly shorter than $\Path$ contradicting the assumption that $\Path$ is a
geodesic in $(\ClosedSet,d_\ClosedSet)$.
\end{proof}

Note that the diameter of $\ClosedSet$ has to be bounded with respect to the length metric as the following example shows:

\begin{exmpl}
Let $\Space$ be the unit circle, which is a \CAT{1}-space. It has diameter $\pi = D_1$ and hence every subset has diameter at most $D_1$. Let $B$ be an open ball of radius $\pi/4$ in $\Space$ and let $\ClosedSet = \Space \setminus B$. Then $\ClosedSet$ is closed, connected, and locally convex but it is not $\pi$-convex. In fact, the unique geodesic joining the boundary points of $\ClosedSet$ lies in the closure of $B$.
\end{exmpl}

% \comment{Does it help if the diameter is strictly inferior to $D_\kappa$?}

For \CAT{0}-spaces, we can use the Cartan--Hadamard theorem instead of the Hopf--Rinow theorem and dispense with local compactness. Recall that a space is nonpositively curved, if it is locally \CAT{0}. We use the following version of the Cartan--Hadamard theorem (see \cite[Theorem~II.4.1]{brihae}):

\begin{thm}
\label{thm:cartan_hadamard}
Let $\Space$ be a complete, connected, nonpositively curved metric space. Then $\widetilde{\Space}$ is a \CAT{0} space.
\end{thm}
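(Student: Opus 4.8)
The plan is to follow the standard structure of the metric Cartan–Hadamard theorem (as in \cite{brihae}): reduce to the simply connected case and then upgrade the local $\CAT{0}$ condition to a global one. First I would equip the universal cover $\widetilde{\Space}$ with the length metric obtained by lifting the local metric of $\Space$ through the covering projection $p \colon \widetilde{\Space} \to \Space$. Since $p$ is then a local isometry and $\Space$ is complete and locally $\CAT{0}$, the cover $\widetilde{\Space}$ is again complete (completeness lifts to covers of complete length spaces), simply connected, and locally $\CAT{0}$. This reduces the theorem to the statement that a complete, simply connected, locally $\CAT{0}$ space is globally $\CAT{0}$.

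Next I would record the local structure. By hypothesis every point has a ball that is $\CAT{0}$ in its own right, hence uniquely geodesic and convex, with geodesics depending continuously on their endpoints; within such a ball Proposition~\ref{prop:local_geodesic} applies (here $D_0 = \infty$), so a path is a local geodesic exactly when it is locally length-minimizing. The heart of the argument is then to pass from local to global geodesics using simple connectivity. Given $\Point, \AltPoint \in \widetilde{\Space}$ I would take any path joining them and straighten it: subdividing it into pieces lying in $\CAT{0}$ balls and replacing each piece by the unique local geodesic with the same endpoints yields, after iteration, a local geodesic homotopic rel endpoints to the original path, the intermediate stages giving a genuine homotopy (each replacement occurs within a convex ball). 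A homotopy-of-geodesics lemma -- proved by showing that the set of parameters on which two local geodesics homotopic rel endpoints agree is open (by local uniqueness) and closed (by continuity) -- then forces two such local geodesics to coincide. Since $\widetilde{\Space}$ is simply connected, any two paths with the same endpoints are homotopic rel endpoints, so $\widetilde{\Space}$ is uniquely geodesic.

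Finally I would verify the global comparison inequality. Given a geodesic triangle in $\widetilde{\Space}$, I would subdivide it into small triangles each contained in a single $\CAT{0}$ ball, where the comparison inequality holds, and reassemble the local comparisons into a comparison for the whole triangle by means of Alexandrov's lemma on comparison angles -- gluing comparison triangles along common sides and using that the relevant sums of adjacent comparison angles cannot decrease under the local $\CAT{0}$ condition. Once the $\CAT{0}$ inequality is established for every geodesic triangle, $\widetilde{\Space}$ is $\CAT{0}$.

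The main obstacle is this last assembly step together with the straightening homotopy: one must control comparison angles carefully across the subdivision so that Alexandrov's lemma can be iterated without loss, and one must ensure that the straightening can genuinely be carried out continuously through local geodesics so that the homotopy-of-geodesics lemma applies. Both are the points where the local curvature bound is converted into a global one, and they are considerably more delicate than the reduction and local-structure steps.
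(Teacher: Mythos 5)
The paper does not prove this theorem at all: it is quoted from Bridson--Haefliger \cite[Theorem~II.4.1]{brihae}, precisely because the proof given there works without local compactness, which is what Theorem~\ref{thm:local_convexity_cat0} needs. So your sketch must stand on its own, and judged that way it has two genuine gaps, both sitting exactly where you yourself flag the argument as ``delicate.'' The first is the straightening step: one round of replacing small subpaths by geodesics in \CAT{0} balls produces a broken geodesic that in general fails to be a local geodesic at the breakpoints, so you must iterate (with shifted subdivisions) and prove that the resulting sequence of paths \emph{converges} to a local geodesic. In a locally compact space this is done with Arzel\`a--Ascoli, but local compactness is not assumed here --- avoiding it is the whole point of invoking Cartan--Hadamard rather than Hopf--Rinow (Proposition~\ref{prop:hopf-rinow}), as the paper stresses. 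Without it one needs a quantitative Cauchy estimate extracted from convexity of the metric inside \CAT{0} balls, which you do not supply. Bridson--Haefliger sidestep the issue entirely: they never straighten arbitrary paths, but instead consider the space of local geodesics issuing from a fixed basepoint, show via completeness and local convexity that the endpoint map is a covering map, and then let simple connectivity finish the argument.

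The second gap is the uniqueness lemma, which is misargued. The set of $t$ at which two local geodesics $\gamma_1,\gamma_2$ with common endpoints agree is indeed closed, but it is \emph{not} ``open by local uniqueness'': if $\gamma_1(t_0)=\gamma_2(t_0)$ and the paths separate immediately after $t_0$, then beyond $t_0$ you are comparing two geodesic segments with \emph{different} endpoints, about which uniqueness of geodesics between a fixed pair of points says nothing; and geodesics in \CAT{0} spaces can branch (a tree is \CAT{0}), so this is not a vacuous objection. The correct open-and-closed argument runs in the homotopy parameter $s$, not in $t$: straighten each track $H(s,\cdot)$ of the homotopy and show that $s \mapsto \gamma_s$ is locally constant. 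The openness step there requires the lemma that two uniformly close local geodesics with the same endpoints coincide, which is proved from local convexity of $t \mapsto d(\gamma_1(t),\gamma_2(t))$ --- a genuinely different input from local uniqueness of geodesics. These two points are precisely where the local hypothesis is converted into a global conclusion, so they cannot be left as assertions.
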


The point here is that $\Space$ is not assumed to be geodesic while $\widetilde{\Space}$ is asserted to be geodesic (cf.\ Remark~II.4.2(2) in \cite{brihae}). We also use \cite[Proposition~II.4.14]{brihae}:

\begin{prop}
\label{prop:universal_cover_isometric_embedding}
Let $\Space$ and $\AltSpace$ be complete connected metric spaces. Assume that $\Space$ is nonpositively curved and that $\AltSpace$ is locally a length space. If $f \colon \AltSpace \to \Space$ is locally an isometric embedding, then $\AltSpace$ is nonpositively curved and every continuous lifting $\tilde{f} \colon \widetilde{\AltSpace} \to \widetilde{\Space}$ of $f$ is an isometric embedding.
\end{prop}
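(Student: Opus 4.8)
The plan is to prove the two assertions separately, using that nonpositive curvature is a local condition, the Cartan--Hadamard theorem (Theorem~\ref{thm:cartan_hadamard}), and the fact that in a \CAT{0}-space local geodesics are geodesics (Proposition~\ref{prop:local_geodesic} with $D_\kappa = \infty$).

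First I would show that $\AltSpace$ is nonpositively curved. Since this is a local condition, I fix $\AltPoint \in \AltSpace$ and choose a neighborhood $V$ of $\AltPoint$ on which $f$ is an isometric embedding; shrinking $V$, I may assume that $V$ is a length space (as $\AltSpace$ is locally a length space) and that $f(V)$ is contained in a small convex ball $B \subseteq \Space$ that is \CAT{0} (possible because $\Space$ is locally \CAT{0}). The key point is that $f|_V$ sends geodesics of $V$ to geodesics of $\Space$: if $\Path$ is a geodesic in $V$ from $y_1$ to $y_2$, then $f \circ \Path$ has the same length as $\Path$, and this length equals $d(y_1,y_2) = d(f(y_1),f(y_2))$, so $f \circ \Path$ is distance-realizing. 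Hence a geodesic triangle in $V$ maps to a geodesic triangle in $B$ with the same side lengths, the two triangles have a common comparison triangle, and the \CAT{0} comparison inequality that holds in $B$ transfers back to $V$. Thus $V$ is \CAT{0}, and $\AltSpace$ is nonpositively curved.

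For the second assertion, I would first invoke the Cartan--Hadamard theorem to conclude that both universal covers $\widetilde{\AltSpace}$ and $\widetilde{\Space}$ are \CAT{0}-spaces. Next I would observe that the lift $\tilde f$ is again locally an isometric embedding: the covering projections $\widetilde{\AltSpace} \to \AltSpace$ and $\widetilde{\Space} \to \Space$ are local isometries and $\tilde f$ lifts $f$, so locally $\tilde f$ factors as a composition of local isometric embeddings. Now fix $\tilde p, \tilde q \in \widetilde{\AltSpace}$ and let $\Path$ be the geodesic joining them, which exists because $\widetilde{\AltSpace}$ is \CAT{0}. Since $\tilde f$ is a local isometric embedding, $\tilde f \circ \Path$ is a local geodesic in $\widetilde{\Space}$ of the same length as $\Path$, namely $d(\tilde p, \tilde q)$. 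By Proposition~\ref{prop:local_geodesic}, applied in the \CAT{0}-space $\widetilde{\Space}$, this local geodesic is a genuine geodesic, so $d(\tilde f(\tilde p), \tilde f(\tilde q)) = l(\tilde f \circ \Path) = d(\tilde p, \tilde q)$. Therefore $\tilde f$ preserves all distances and is in particular injective, hence an isometric embedding.

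I expect the main obstacle to be the first step: one has to be careful that the subspace metric that $\Space$ induces on $f(V)$ is the one that makes $f|_V$ distance-preserving, and to combine this correctly with the local length-space assumption so that enough geodesics exist in $V$ for the triangle comparison to have content. Once the transfer of the comparison inequality through $f$ is arranged cleanly, the second step is essentially formal, as it only repackages the principle that a local isometric embedding sends geodesics to local geodesics together with the rigidity of geodesics in \CAT{0}-spaces.
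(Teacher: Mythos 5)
The paper offers no proof of this proposition: it is quoted verbatim from Bridson--Haefliger (Proposition~II.4.14), so your proposal can only be judged on its own merits. Its second half is correct and is the standard argument: by the first assertion and Theorem~\ref{thm:cartan_hadamard} both $\widetilde{\AltSpace}$ and $\widetilde{\Space}$ are complete \CAT{0}-spaces, the lift $\tilde f$ is again locally an isometric embedding since the covering projections are local isometries, and a geodesic $[\tilde p,\tilde q]$ is sent to a local geodesic of the same length, which is a genuine geodesic by Proposition~\ref{prop:local_geodesic} with $D_\kappa=\infty$; hence $\tilde f$ preserves distances. Also, the worry you raise about the subspace metric on $f(V)$ is harmless: since $f$ preserves distances on $V$, it preserves lengths of paths in $V$, so nothing goes wrong there.

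The genuine gap is in the first half, exactly at the point you flagged but did not resolve. Your argument begins ``if $\Path$ is a geodesic in $V$'', but nothing in your setup produces \emph{any} geodesics in $V$: a length space need not be geodesic, and being \CAT{0} (hence nonpositively curved) requires the neighborhood to be a geodesic space, so transferring the comparison inequality along $f$ says nothing about pairs of points that are joined by no geodesic at all. The telltale sign is that your proof of the first assertion never uses the completeness of $\AltSpace$, yet the assertion is false without it. Concretely, take $\Space = \mathbb{R}^2$ and $\AltSpace = \mathbb{R}^2 \setminus (\mathbb{Q}\times\{0\})$ with the induced metric: this space is connected, locally a length space (detours around the countably many missing points cost arbitrarily little), and the inclusion is an isometric embedding; but no neighborhood of a point $(x_0,0)$ with $x_0$ irrational is geodesic, because two points $(a,0)$ and $(b,0)$ with a missing rational between them admit no geodesic (any path of length $b-a$ would have to be the straight segment, which leaves the space). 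So this $\AltSpace$ is not nonpositively curved, while your argument would certify that it is. To close the gap one must invoke completeness, for instance as follows: for $y_1,y_2$ in a small ball, the length-space property yields approximate midpoints $m_n$; their images $f(m_n)$ are approximate midpoints of $f(y_1),f(y_2)$ inside the \CAT{0} ball $B$, and the comparison (CN) inequality in $B$ forces $f(m_n)$ to converge to the true midpoint in $B$, so $(m_n)_n$ is Cauchy; by completeness of $\AltSpace$ it converges to a genuine midpoint of $y_1$ and $y_2$. Iterating the midpoint construction and using completeness again produces geodesics between nearby points of $\AltSpace$, and only then does your triangle-comparison transfer apply and yield that $\AltSpace$ is nonpositively curved.
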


We can now prove the convexity criterion for complete \CAT{0}-spaces that are not necessarily locally compact.

\begin{thm}
\label{thm:local_convexity_cat0}
Let $\Space$ be a complete \CAT{0}-space. A closed connected subset $\ClosedSet \subseteq \Space$ that is locally convex is convex.
\end{thm}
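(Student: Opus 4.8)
The plan is to equip $\ClosedSet$ with its length metric $d_\ClosedSet$ and feed the inclusion $\iota \colon (\ClosedSet,d_\ClosedSet) \into (\Space,d)$ into the Cartan--Hadamard machinery. By Observation~\ref{obs:inclusion_local_isometry} this inclusion is locally an isometric embedding, and $\Space$, being \CAT{0}, is complete, connected, nonpositively curved and simply connected—so it is its own universal cover, $\widetilde{\Space} = \Space$. The goal is to apply Proposition~\ref{prop:universal_cover_isometric_embedding} with $\AltSpace = (\ClosedSet,d_\ClosedSet)$ and $f = \iota$, and then to exploit the resulting isometric embedding of the universal cover of $\ClosedSet$ to pin down $\ClosedSet$ inside $\Space$.

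The first task is therefore to verify the hypotheses on $\AltSpace = (\ClosedSet,d_\ClosedSet)$, namely that it is complete, connected and (locally) a length space, and crucially to do so \emph{without} local compactness. For completeness I would argue that, since $d \le d_\ClosedSet$, a $d_\ClosedSet$-Cauchy sequence is $d$-Cauchy and hence converges in the complete space $(\ClosedSet,d)$ (recall $\ClosedSet$ is closed in the complete space $\Space$); near its limit the metrics $d$ and $d_\ClosedSet$ coincide by Observation~\ref{obs:inclusion_local_isometry}, so the sequence converges in $d_\ClosedSet$ as well. Connectedness in the length metric follows exactly as in the proof of Corollary~\ref{cor:closed_locally_compact_nice_in_length_metric}: local convexity makes the set of points reachable from a given point by rectifiable paths open and closed, so $d$-connectedness of $\ClosedSet$ forces $(\ClosedSet,d_\ClosedSet)$ to be connected. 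Finally, $(\ClosedSet,d_\ClosedSet)$ is a length space by construction, hence in particular locally a length space. None of these three steps needs local compactness, which is the whole point of this variant.

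With the hypotheses in place, Proposition~\ref{prop:universal_cover_isometric_embedding} yields that $(\ClosedSet,d_\ClosedSet)$ is nonpositively curved and that the continuous lifting $\tilde\iota \colon \widetilde{\ClosedSet} \to \widetilde{\Space} = \Space$ of $\iota$ is an isometric embedding. Because $\Space$ is its own universal cover, this lifting is simply $\tilde\iota = \iota \circ p$, where $p \colon \widetilde{\ClosedSet} \to (\ClosedSet,d_\ClosedSet)$ is the covering projection. The decisive observation is that $\tilde\iota$, being an isometric embedding, is injective, whence $p$ is injective; an injective covering projection is a homeomorphism. Thus $(\ClosedSet,d_\ClosedSet)$ is simply connected, so by the Cartan--Hadamard Theorem~\ref{thm:cartan_hadamard} (applied to $(\ClosedSet,d_\ClosedSet)$, which is complete, connected and nonpositively curved) it is itself a \CAT{0}-space, in particular uniquely geodesic; and $\iota$ is an isometric embedding, so $d_\ClosedSet = d$ on $\ClosedSet$.

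To finish I would read off convexity: given $\Point,\AltPoint \in \ClosedSet$, let $\Path$ be the geodesic joining them in $(\ClosedSet,d_\ClosedSet)$. Since $\iota$ is an isometric embedding, $\iota \circ \Path$ is a geodesic of $\Space$ from $\Point$ to $\AltPoint$ whose image lies in $\ClosedSet$; by unique geodesicity of the \CAT{0}-space $\Space$ it must coincide with $[\Point,\AltPoint]$, which therefore lies in $\ClosedSet$. Hence $\ClosedSet$ is convex. I expect the main obstacle to be the checking of the input to Proposition~\ref{prop:universal_cover_isometric_embedding}—above all the completeness of $(\ClosedSet,d_\ClosedSet)$ in the absence of local compactness—together with the clean identification of the lifting $\tilde\iota$ with $\iota \circ p$, which is exactly what lets injectivity of $\tilde\iota$ collapse the covering and make $\ClosedSet$ simply connected.
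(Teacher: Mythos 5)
Your proposal is correct and follows essentially the same route as the paper's own proof: Observation~\ref{obs:inclusion_local_isometry} makes the inclusion $(\ClosedSet,d_\ClosedSet) \into (\Space,d)$ locally an isometric embedding, Proposition~\ref{prop:universal_cover_isometric_embedding} gives nonpositive curvature of $(\ClosedSet,d_\ClosedSet)$ and that $\widetilde{\ClosedSet} \onto \ClosedSet \into \Space = \widetilde{\Space}$ is an isometric embedding, whence $\ClosedSet$ is simply connected and Theorem~\ref{thm:cartan_hadamard} makes it \CAT{0}, hence geodesic with respect to $d$. The only difference is that you spell out details the paper leaves implicit (completeness and connectedness of $(\ClosedSet,d_\ClosedSet)$ without local compactness, the identification $\tilde\iota = \iota \circ p$, and the final unique-geodesic argument), all of which are correct.
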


\begin{proof}
By Observation~\ref{obs:inclusion_local_isometry}, the inclusion $(\ClosedSet,d_\ClosedSet) \into (\Space,d)$ is locally an isometric embedding. So Proposition~\ref{prop:universal_cover_isometric_embedding} implies that $(\ClosedSet,d_\ClosedSet)$ is nonpositively curved and the composition
\[
\widetilde{\ClosedSet} \onto \ClosedSet \into \Space = \widetilde{\Space} 
\]
is an isometric embedding. Hence $\ClosedSet$ is simply connected and the inclusion $(\ClosedSet,d_\ClosedSet) \into (\Space,d)$ is an isometric embedding. So $\ClosedSet$ is \CAT{0} by Theorem~\ref{thm:cartan_hadamard} and in particular geodesic (with respect to $d$).
\end{proof}

\section{Application: constructing convex sets in euclidean buildings}
\label{sec:convex_sets_in_euclidean_buildings}

We call a metric space $\Space$ \emph{locally uniquely geodesic} if for every $\Point \in \Space$ there is an $\varepsilon > 0$ such that $\Ball{}(\Point,\varepsilon)$ is uniquely geodesic, that is, for points $\AltPoint,\YetAltPoint \in \Ball{}(\Point,\varepsilon)$ there is a unique geodesic that joins $\AltPoint$ to $\YetAltPoint$. If $\Space$ is geodesic and of bounded curvature then it is locally uniquely geodesic.

If $\ClosedSet$ is a closed subset of a locally uniquely geodesic space $\Space$, we say that $\Point$ is a \emph{cone point} of $\ClosedSet$ if there is an $\varepsilon > 0$ such that $\Ball{}(\Point,\varepsilon)$ is uniquely geodesic and
\begin{equation}
\text{for every }b \in \Ball{}(\Point,\varepsilon)\text{ either }[a,b] \subseteq \ClosedSet\text{ or }[a,b] \intersect \ClosedSet = \{a\} \text{ .}\label{eq:cone_condition}
\end{equation}
We say that $\ClosedSet$ is \emph{polyhedral} if each of its points is a cone point.

A building is a cell complex that can be covered by Coxeter complexes, called apartments, of a certain type subject to some conditions, see \cite{abrbro}. One of the conditions requires that any two points be contained in a common apartment. The building is \emph{spherical} if its apartments are spherical Coxeter complexes, that is, isometric to round spheres. The building is \emph{euclidean} if its apartments are affine Coxeter complexes, that is, isometric to a euclidean space.

In either case the metrics on the apartments fit together to define a metric on the whole building. By a \emph{metric building} we mean either a spherical or a euclidean building with that metric. A spherical building is a \CAT{1}-space, a euclidean building is a \CAT{0}-space.

We collect some features of buildings that we will need later:

\begin{fact}
\label{fact:buildings}
Let $\Space$ be a metric building.
\begin{enumerate}
\item The link of every cell of $\Space$ that is neither empty nor a chamber is a spherical building.\label{item:links_are_buildings}
\item Given an apartment $\Apartment$ of $\Space$ and a chamber $\Chamber \subseteq \Apartment$ there is a map $\Retraction{\Apartment}{\Chamber} \colon \Space \to \Apartment$ called the \emph{retraction onto $\Apartment$ centered at $\Chamber$} such that $\Retraction{\Apartment}{\Chamber}|_\Apartment = \id_\Apartment$ and $\Retraction{\Apartment}{\Chamber}|_{\Apartment'}$ is an isometry for every apartment $\Apartment'$ that contains $\Chamber$. Moreover, $\Retraction{\Apartment}{\Chamber} \circ \Retraction{\Apartment'}{\Chamber} = \Retraction{\Apartment}{\Chamber}$.\label{item:retractions}
\item Let $\Chamber$ be a chamber and let $\Cell$ be an arbitrary cell. There is a unique chamber $\Proj{\Cell}\Chamber \ge \Cell$, called the \emph{projection of $\Chamber$ onto $\Cell$} such that every apartment that contains $\Chamber$ and $\Cell$ also contains $\Proj{\Cell}\Chamber$. If $\Apartment$ is an apartment that contains $\Chamber$, then $\Proj{\Retraction{\Apartment}{\Chamber}\Cell}\Chamber = \Retraction{\Apartment}{\Chamber}(\Proj{\Cell}\Chamber)$.\label{item:projections}
\end{enumerate}
\end{fact}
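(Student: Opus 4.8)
The three assertions are all classical facts about buildings, so the plan is to derive each one from the building axioms (B0)--(B2) and ultimately to record them with a reference to \cite{abrbro} rather than to reproduce the standard arguments in full. The organizing observation is that the structure relevant here does not distinguish between the spherical and the euclidean case: links of cells both in a round sphere and in a euclidean space are themselves round spheres, and the reflection group restricts to a spherical Coxeter structure on them. This is exactly why a nonempty cell that is not a chamber has a spherical link in either type of building, while the empty cell (whose link is all of $\Space$) and a chamber (whose link is empty) must be excluded.

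For part (\ref{item:links_are_buildings}) I would fix a cell $\Cell$ that is neither empty nor a chamber and first treat apartments. Each apartment $\Apartment$ of $\Space$ containing $\Cell$ is a Coxeter complex, i.e.\ a round sphere or a euclidean space, and the link $\Link{\Apartment}(\Cell)$ of $\Cell$ inside it is a round sphere carrying the Coxeter structure induced by the reflections fixing $\Cell$; hence $\Link{\Apartment}(\Cell)$ is a spherical Coxeter complex. Then I would verify the building axioms for $\Link{\Space}(\Cell)$ using the family $\{\Link{\Apartment}(\Cell) : \Apartment \ni \Cell\}$ as its apartments: the chambers and cells of the link are the cofaces of $\Cell$ in $\Space$, any two of them lie in a common apartment $\Apartment$ of $\Space$ by (B1) (which then contains $\Cell$), and the isomorphism axiom (B2) for the link is inherited from that of $\Space$ by passing to links. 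This exhibits $\Link{\Space}(\Cell)$ as a spherical building.

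Parts (\ref{item:retractions}) and (\ref{item:projections}) are the retraction and the projection (gate) constructions. To build $\Retraction{\Apartment}{\Chamber}$, for an arbitrary chamber $D$ of $\Space$ I would choose by (B1) an apartment $\Apartment'$ containing both $\Chamber$ and $D$ and send $D$ to its image under the isomorphism $\Apartment' \to \Apartment$ provided by (B2) that fixes $\Chamber$; since that isomorphism fixes $\Apartment \intersect \Apartment'$ pointwise, one gets well-definedness (independence of $\Apartment'$), the identities $\Retraction{\Apartment}{\Chamber}|_\Apartment = \id_\Apartment$ and $\Retraction{\Apartment}{\Chamber}|_{\Apartment'}$ an isometry for every $\Apartment' \ni \Chamber$. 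The relation $\Retraction{\Apartment}{\Chamber} \circ \Retraction{\Apartment'}{\Chamber} = \Retraction{\Apartment}{\Chamber}$ holds because $\Retraction{\Apartment'}{\Chamber}$ carries each chamber into an apartment through $\Chamber$ without altering its eventual $\Retraction{\Apartment}{\Chamber}$-image. For (\ref{item:projections}) I would define $\Proj{\Cell}\Chamber$ as the gate of the residue of $\Cell$ toward $\Chamber$, whose characterizing property is precisely that it lies in every apartment containing both $\Chamber$ and $\Cell$; the compatibility $\Proj{\Retraction{\Apartment}{\Chamber}\Cell}\Chamber = \Retraction{\Apartment}{\Chamber}(\Proj{\Cell}\Chamber)$ then follows by picking an apartment $\Apartment'' \ni \Chamber, \Cell$, on which $\Retraction{\Apartment}{\Chamber}$ restricts to an isomorphism fixing $\Chamber$, and using that isomorphisms preserve projections.

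The routine parts are the Coxeter-complex structure on links and the gate property; the step I expect to demand the most care is the \emph{naturality} of these constructions, namely the two compatibility identities relating $\Retraction{\Apartment}{\Chamber}$ to itself and to $\Proj{\Cell}\Chamber$. These require tracking how distinct apartments through $\Chamber$ are identified with $\Apartment$ and checking that the identifications agree on overlaps, which is the only place where the choices made in the construction must be shown not to matter. Since all of this is standard, I would in the end simply cite \cite{abrbro}.
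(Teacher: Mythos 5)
Your proposal is correct and matches the paper's treatment: the paper states these as a \emph{Fact} without proof, relying on the standard theory of buildings in \cite{abrbro}, which is exactly the reference you fall back on, and your sketches (links via the induced Coxeter structure and inherited axioms, retractions via (B1)/(B2) with the rigidity of apartment isomorphisms, projections via the gate property) are the standard arguments found there. No discrepancies to report.
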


If $[\Point,\AltPoint]$ is a geodesic segment, we denote by $[\Point,\AltPoint]_\Point$ the direction that it defines in $\Link{}\Point$. If $\Point$ is a point and $\Chamber$ is a chamber we write $\Proj{\Link{\Space}\Point}\Chamber$ for the directions of $\Link{\Space}\Point$ that point into $\Proj{\Cell}\Chamber$, where $\Cell$ is the carrier of $\Point$ (the smallest cell that contains $\Point$).

\begin{prop}
\label{prop:local_convexity_of_polyhedral_sets}
Let $\Space$ be a metric building and $\ClosedSet \subseteq \Space$ a closed polyhedral subset.
\begin{enumerate}
\item The following are equivalent:\label{item:local_convexity_of_polyhedral_sets}
\begin{enumerate}
\item $\ClosedSet$ is locally convex.
\item For every $\Point \in \ClosedSet$ the subset $\Link{\ClosedSet}\Point$ is $\pi$-convex in $\Link{\Space}\Point$.
\end{enumerate}
\item For every $\Point \in \ClosedSet$ the subset $\Link{\ClosedSet}\Point$ of $\Link{\Space}\Point$ is closed and polyhedral.\label{item:polyhedral_hereditary}
\end{enumerate}
\end{prop}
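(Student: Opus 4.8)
The plan is to reduce both assertions to the local conical structure of buildings. Fix $\Point\in\ClosedSet$ and recall that, since $\Link{\Space}\Point$ is a spherical building (Fact~\ref{fact:buildings}~(\ref{item:links_are_buildings})) and hence a \CAT{1}-space, a sufficiently small ball $\Ball{}(\Point,\varepsilon)$ is isometric to the ball of radius $\varepsilon$ about the apex $o$ of the $\kappa$-cone $C$ over $\Link{\Space}\Point$ (the euclidean cone if $\Space$ is euclidean, the spherical cone if $\Space$ is spherical; see \cite[Chapter~I.5]{brihae}). Under this isometry a point $b\ne\Point$ is written $(s,u)$ with $s=d(\Point,b)$ and $u=[\Point,b]_\Point\in\Link{\Space}\Point$, and the geodesic $[\Point,b]$ is the radial segment $\{(t,u):0\le t\le s\}$. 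Because $\Point$ is a cone point of the polyhedral set $\ClosedSet$, the cone condition \eqref{eq:cone_condition} says precisely that $\ClosedSet\intersect\Ball{}(\Point,\varepsilon)$ is the subcone $\{(s,u):u\in W\}$ over $W\defeq\Link{\ClosedSet}\Point$. This identification is what makes the link a faithful local model, and I would establish it first, uniformly in the euclidean and spherical cases.

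For part~(\ref{item:local_convexity_of_polyhedral_sets}) I would prove the pointwise statement that $\ClosedSet$ is locally convex at $\Point$ if and only if $W$ is $\pi$-convex in $\Link{\Space}\Point$, and then take the conjunction over all $\Point$. The key input is the description of geodesics in $C$: for $u,w$ with $d(u,w)<\pi$ the geodesic between $(r,u)$ and $(s,w)$ avoids the apex and its radial projection to $\Link{\Space}\Point$ traverses the geodesic $[u,w]$, whereas for $d(u,w)\ge\pi$ it runs through the apex $o$, which corresponds to $\Point\in\ClosedSet$. Hence if $W$ is $\pi$-convex, every geodesic between two points of the subcone stays in it (in the first case by $\pi$-convexity of $W$, in the second because $o$ and both radial halves lie in the subcone), so $\ClosedSet\intersect\Ball{}(\Point,\varepsilon)$ is convex. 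Conversely, a pair $u,w\in W$ with $d(u,w)<\pi$ and $[u,w]\not\subseteq W$ yields points $(r,u),(r,w)$ whose connecting geodesic leaves the subcone, defeating local convexity. This is exactly where the threshold $\pi$ enters and explains why $\pi$-convexity is the correct condition.

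For part~(\ref{item:polyhedral_hereditary}), closedness of $W$ is immediate: if $v_n\in W$ converge to $v$, the corresponding radial segments converge and, $\ClosedSet$ being closed, their limit lies in $\ClosedSet$, so $v\in W$. For polyhedrality I would show that each $v\in W$ is a cone point of $W$ by transporting the cone condition of $\ClosedSet$ from an interior point of the $v$-ray. Concretely, pick a small $r>0$ and put $m\defeq(r,v)$; since $\ClosedSet$ is polyhedral, $m$ is a cone point, so \eqref{eq:cone_condition} holds for all $b$ near $m$. Taking $b=(r,w)$ with $w$ near $v$ and invoking again the geodesic description in $C$, the segment $[m,b]$ has radial projection $[v,w]$ and meets $\ClosedSet$ exactly over those parameters where the projected direction lies in $W$; thus $[m,b]\subseteq\ClosedSet$ iff $[v,w]\subseteq W$, and $[m,b]\intersect\ClosedSet=\{m\}$ iff $[v,w]\intersect W=\{v\}$. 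The cone condition at $m$ therefore delivers the cone condition for $W$ at $v$, i.e.\ $v$ is a cone point.

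The conceptual content is entirely in the two cone identifications—that a small building ball is a cone over its link, and that \eqref{eq:cone_condition} makes $\ClosedSet$ the subcone over $\Link{\ClosedSet}\Point$—together with the correspondence between geodesics of $C$ and of $\Link{\Space}\Point$ across the $d=\pi$ threshold. I expect the main technical care to lie in making the identification $\ClosedSet\intersect\Ball{}(\Point,\varepsilon)=\{(s,u):u\in W\}$ rigorous uniformly in both cone types, and in verifying the radial-projection statement for geodesics between two points at positive radius used in part~(\ref{item:polyhedral_hereditary}); once these are in place, both assertions follow formally.
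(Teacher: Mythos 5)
Your proposal is correct, and it establishes the crucial dictionary---between geodesics of $\Space$ near $\Point$ and geodesics of $\Link{\Space}\Point$---by a genuinely different route than the paper. The paper never passes through the abstract $\kappa$-cone: at each step it places the two or three relevant points in a common apartment (possible because they lie in the open star of $\Point$, and apartments are subcomplexes) and then verifies, by elementary euclidean or spherical geometry inside that apartment, that the geodesic from $[\Point,\AltPoint]_\Point$ to $[\Point,\YetAltPoint]_\Point$ is exactly $\{[\Point,\FourthPoint]_\Point \mid \FourthPoint \in [\AltPoint,\YetAltPoint]\}$; the case of directions at distance $\pi$ is handled by a separate auxiliary result (Lemma~\ref{lem:angle_pi_implies_lying_on_geodesic} combined with Proposition~\ref{prop:local_geodesic}). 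You instead invoke the general theory of $M_\kappa$-polyhedral complexes: the isometry of a small ball with a ball about the apex of $C_\kappa(\Link{\Space}\Point)$ holds for complexes with finitely many shapes---which metric buildings are---but note that this is established in \cite[Chapter~I.7]{brihae} (Theorem~7.16), not in Chapter~I.5 as you wrote; Chapter~I.5 constructs the cone and gives the description of its geodesics (Proposition~5.10) that you use. Your route buys three things: the cone condition \eqref{eq:cone_condition} becomes the transparent statement that $\ClosedSet \intersect \Ball{}(\Point,\varepsilon)$ is the subcone over $\Link{\ClosedSet}\Point$; the distance-$\geq\pi$ case comes for free, since the geodesic through the apex lies in the subcone because $\Point \in \ClosedSet$, so no analogue of Lemma~\ref{lem:angle_pi_implies_lying_on_geodesic} is needed; and since apartments are never used, your proof works verbatim for arbitrary $M_\kappa$-polyhedral complexes with finitely many shapes, not just buildings. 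What the paper's route buys is elementarity: given the building axioms it needs no cone machinery beyond plane geometry in an apartment. Your part~(\ref{item:polyhedral_hereditary}) is structurally parallel to the paper's (the cone condition at $\Point$ played against the cone condition at a nearby point on the ray toward $v$), just written in cone coordinates, while your closedness argument via limits of radial segments is marginally simpler than the paper's argument that the complement of $\Link{\ClosedSet}\Point$ is open. The two technical points you flag at the end are indeed exactly the ones requiring care, and both are covered by the citations above.
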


\begin{proof}
Assume that $\ClosedSet$ is locally convex and let $\Point \in \ClosedSet$ be arbitrary. Let $\varepsilon_1 > 0$ be such that \eqref{eq:cone_condition} is satisfied, let $\varepsilon_2 > 0$ be such that $\ClosedSet \intersect \Ball{}(\Point,\varepsilon_2)$ is convex, let $\varepsilon_3 > 0$ be such that $\Ball{}(\Point,\varepsilon_3)$ is contained in the open star of $\Point$ and set $\varepsilon \defeq \min\{\varepsilon_1,\varepsilon_2,\varepsilon_3\}$. Let $[\Point,\AltPoint]_\Point$ and $[\Point,\YetAltPoint]_\Point$ with $\AltPoint,\YetAltPoint \in \Ball{}(\Point,\varepsilon)$ be directions in $\Link{\ClosedSet}\Point$ that have distance $< \pi$ in $\Link{\Space}\Point$. The condition \eqref{eq:cone_condition} implies that $\AltPoint$ and $\YetAltPoint$ are contained in $\ClosedSet$. The geodesic $[\AltPoint,\YetAltPoint]$ is contained in $\ClosedSet$ because $\ClosedSet \intersect \Ball{}(\Point,\varepsilon)$ is convex. Since $\AltPoint$ and $\YetAltPoint$ are contained in the open star of $\Point$, 
there is an apartment $\Apartment$ that contains $\Point$, $\AltPoint$, and $\YetAltPoint$. In this apartment it is easy to see that the geodesic from $[\Point,\AltPoint]_\Point$ to $[\Point,\YetAltPoint]_\Point$ consists precisely of the directions $[\Point,\FourthPoint]_\Point$ with $\FourthPoint \in [\AltPoint,\YetAltPoint]$. These are contained in $\Link{\ClosedSet}\Point$ by \eqref{eq:cone_condition}.

Conversely we take an arbitrary $\Point \in \ClosedSet$ and assume that $\Link{\ClosedSet}\Point$ is $\pi$-convex in $\Link{\Space}\Point$ and want to show that $\ClosedSet$ is locally convex in $\Point$. Let $\varepsilon_1 > 0$ be such that \eqref{eq:cone_condition} holds, let $\varepsilon_2 > 0$ be such that $\Ball{}(\Point,\varepsilon_2)$ is contained in the open star of $\Point$ and set $\varepsilon \defeq \min\{\varepsilon_1,\varepsilon_2\}$. Let $\AltPoint,\YetAltPoint \in \ClosedSet \intersect \Ball{}(\Point,\varepsilon)$ be arbitrary. If $\angle_\Point(\AltPoint,\YetAltPoint) = \pi/2$, then $[\AltPoint,\YetAltPoint] = [\AltPoint,\Point] \union [\Point,\YetAltPoint]$ (see Lemma~\ref{lem:angle_pi_implies_lying_on_geodesic}). Otherwise let $\Apartment$ be an apartment that contains $\Point$, $\AltPoint$, and $\YetAltPoint$. In this apartment it is easy to see that the geodesic from $[\Point,\AltPoint]_\Point$ to $[\Point,\YetAltPoint]_\Point$ is $\{[\Point,\FourthPoint]_\Point \mid \FourthPoint \in [\
AltPoint,\YetAltPoint]\}$ which by assumption is contained in $\Link{\ClosedSet}\Point$. Thus \eqref{eq:cone_condition} implies that each of the $\FourthPoint \in [\AltPoint,\YetAltPoint]$ lies in $\ClosedSet$.

For the second statement let $\Point \in \ClosedSet$ be arbitrary and let $\varepsilon > 0$ be such that \eqref{eq:cone_condition} holds. Let $\AltPoint \in \Ball{}(\Point,\varepsilon/2)$ be such that $[\Point,\AltPoint]_\Point$ does not lie in $\Link{\ClosedSet}\Point$. Then \eqref{eq:cone_condition} implies that $\AltPoint \nin \ClosedSet$. So there is a $\delta$-ball around $\AltPoint$ that does not meet the closed set $\ClosedSet$ and we take $\delta < \varepsilon/2$. Then \eqref{eq:cone_condition} implies that the open set $\{[\Point,\YetAltPoint]_\Point \mid \YetAltPoint \in \Ball{}(\AltPoint,\delta)\}$ that contains $[\Point,\AltPoint]_\Point$ does not meet $\Link{\ClosedSet}\Point$. This shows that $\Link{\ClosedSet}\Point$ is closed.

It remains to see that $\Link{\ClosedSet}\Point$ is polyhedral. So let $\Point \in \ClosedSet$ be arbitrary and let $\varepsilon_1 > 0$ be such that \eqref{eq:cone_condition} is satisfied. Let $\varepsilon_2 > 0$ be such that $\Ball{}(\Point,\varepsilon_2)$ is contained in the open star of $\Point$ and set $\varepsilon \defeq \min\{\varepsilon_1, \varepsilon_2\}$. Let $[\Point,\AltPoint]_\Point$ with $\AltPoint \in \Ball{}(\Point,\varepsilon/2)$ be an arbitrary direction of $\Link{\ClosedSet}\Point$. We have to show that $[\Point,\AltPoint]_\Point$ is a cone point of $\Link{\ClosedSet}\Point$. Let $\delta' > 0$ be such that \eqref{eq:cone_condition} is satisfied for $\AltPoint$ and $\delta'$ and set $\delta \defeq \min\{\delta,\varepsilon/2\}$. Let $\bar{\delta} > 0$ be such that $\Ball{\Link{\Space}\Point}([\Point,\AltPoint]_\Point,\bar{\delta}) \subseteq \{[\Point,\YetAltPoint]_\Point \mid \YetAltPoint \in \Ball{\Space}(\AltPoint,\delta)\}$. We claim that \eqref{eq:cone_condition} is satisfied for $[\Point,
\AltPoint]_\Point$ and $\bar{\delta}$.

So let $[\Point,\YetAltPoint]_\Point \in \Ball{\Link{\Space}\Point}([\Point,\AltPoint]_\Point,\bar{\delta})$ be arbitrary. By construction we may assume that $\YetAltPoint$ lies in $\Ball{\Space}(\AltPoint,\delta)$. So the condition \eqref{eq:cone_condition} for $\AltPoint$ implies that either $[\AltPoint,\YetAltPoint] \intersect \ClosedSet = \{\AltPoint\}$ or $[\AltPoint,\YetAltPoint] \subseteq \ClosedSet$.

Considering an apartment that contains $\Point$, $\AltPoint$ and $\YetAltPoint$ it is easy to see that the geodesic segment from $[\Point,\AltPoint]_\Point$ to $[\Point,\YetAltPoint]_\Point$ is $\{[\Point,\FourthPoint]_\Point \mid \FourthPoint \in [\AltPoint,\YetAltPoint]\}$. So the condition \eqref{eq:cone_condition} for $\Point$ implies that either all of that segment is contained in $\Link{\ClosedSet}\Point$ or just $[\Point,\AltPoint]_\Point$.
\end{proof}

In the proof we used the following converse to Remark~I.1.13~(2) of \cite{brihae}. See Definition~I.1.12 of loc.cit.\ for the definition of the angle.

\begin{lem}
\label{lem:angle_pi_implies_lying_on_geodesic}
Let $\Space$ be a metric space and let $\Path_1 \colon [0,a_1] \to \Space$ and $\Path_2 \colon [0,a_2] \to \Space$ be two geodesics with $\Path_1(0) = \Path_2(0)$. If $\angle_{\Path_1(0)}(\Path_1,\Path_2) = \pi$ then the path $\Path \colon [-a_1,a_2] \colon \Space$ defined by $\Path(t) = \Path_1(-t)$ for $t \le 0$ and $\Path(t) = \Path_2(t)$ for $t \ge 0$ is a local geodesic.
\end{lem}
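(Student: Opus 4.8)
The plan is to verify the local-geodesic condition point by point and to observe that it is automatic away from the junction. For any parameter $t_0 \ne 0$ the path $\Path$ agrees, on a small interval around $t_0$, with one of the two geodesics $\Path_1,\Path_2$ (the negative side being $\Path_1$ run backwards, still an isometric embedding), so $\Path$ preserves distances there. Hence the only point where anything must be shown is the junction $\Path_1(0)=\Path_2(0)\eqdef p$. Writing $\Path(-u)=\Path_1(u)$ and $\Path(t)=\Path_2(t)$, the condition to establish is the existence of an $\varepsilon>0$ with
\[
d(\Path_1(u),\Path_2(t)) = u+t \qquad \text{for all } u,t \in (0,\varepsilon),
\]
the mixed-sign pairs being the only nontrivial case. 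Since the triangle inequality always gives $d(\Path_1(u),\Path_2(t)) \le u+t$, the entire content of the lemma is the reverse inequality at small scales.

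First I would translate this into a statement about comparison angles. In the Euclidean comparison triangle $\overline{\triangle}(p,\Path_1(u),\Path_2(t))$ the law of cosines reads
\[
d(\Path_1(u),\Path_2(t))^2 = u^2 + t^2 - 2ut\,\cos\overline{\angle}_p(\Path_1(u),\Path_2(t)),
\]
so $d(\Path_1(u),\Path_2(t)) = u+t$ holds exactly when $\overline{\angle}_p(\Path_1(u),\Path_2(t)) = \pi$. Thus it suffices to prove that the comparison angle at $p$ equals $\pi$ for all sufficiently small $u$ and $t$. The hypothesis $\angle_p(\Path_1,\Path_2)=\pi$ is precisely the assertion that these comparison angles have limit superior $\pi$ as $u,t\to 0$.

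The hard part, and the step I expect to be the main obstacle, is bridging the gap between the Alexandrov angle being $\pi$ and the individual comparison angles being $\pi$. By definition $\angle_p(\Path_1,\Path_2)=\pi$ only furnishes sequences $u_n,t_n\to 0$ along which $\overline{\angle}_p(\Path_1(u_n),\Path_2(t_n))\to\pi$, whereas a local geodesic requires the value $\pi$ to be \emph{attained} for every small $u,t$; the raw definition of the upper angle does not by itself supply this. The engine I would use to overcome it is the fact that a geodesic triangle has angle sum at most $\pi$, which is available in the settings where the lemma is applied, since buildings are \CAT{0} or \CAT{1} spaces. Applied to the triangle $\triangle(\Path_1(u),p,\Path_2(t))$, whose Alexandrov angle at $p$ is the hypothesised $\pi$ because that angle depends only on the germs of $\Path_1,\Path_2$ at $p$, this forces the two remaining angles to vanish and the triangle to degenerate, so that $p$ lies on $[\Path_1(u),\Path_2(t)]$ and hence $d(\Path_1(u),\Path_2(t)) = u+t$ exactly. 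I would therefore treat the metric reformulation of the first two paragraphs as the skeleton and the angle-sum degeneration as the step that converts the limit-superior hypothesis into the pointwise equalities defining a local geodesic; this conversion is where the real work lies, and also where one must be careful about exactly which metric hypotheses on $\Space$ are genuinely being used, see \cite{brihae}.
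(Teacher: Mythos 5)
Your proposal takes a genuinely different route from the paper's, and the comparison is instructive. The paper works with a bare metric space: after the same reduction to the junction point, it argues that if $\Path$ were not a local geodesic at $0$, then the continuous function $(t_1,t_2)\mapsto t_2-t_1-d(\Path(t_1),\Path(t_2))$ on $[-a_1,0]\times[0,a_2]$ would be nonzero (off the axes), hence bounded away from zero by compactness, contradicting $\angle_{\Path_1(0)}(\Path_1,\Path_2)=\pi$. You instead invoke comparison geometry to force each individual comparison angle to equal $\pi$. The step you single out as ``where the real work lies'' is exactly the crux, and in fact more is true: the lemma is \emph{false} for general metric spaces, so no curvature-free argument can close that gap. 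Glue two unit segments at a point $p$, keep the standard metric on each, and set $d(\Path_1(s),\Path_2(t))\defeq s+t-st$; a routine check (the nontrivial inequalities reduce to $st\le s't$ and $t(s'-s)\le 2(s'-s)$ for $s\le s'$) shows this is a metric, and both segments are geodesics. Then
\[
\cos\overline{\angle}_p\bigl(\Path_1(s),\Path_2(t)\bigr)\;=\;-1+(s+t)-\tfrac{st}{2}\;\longrightarrow\;-1,
\]
so the upper angle at $p$ is $\pi$, yet $d(\Path_1(s),\Path_2(t))<s+t$ for all $s,t>0$, so the concatenation is not a local geodesic. This example also pinpoints the error in the paper's own proof: the function from its compactness step equals $st$ here, which is indeed nonzero for $t_1<0<t_2$ but is \emph{not} bounded away from zero, since it vanishes on the axes of the compact square and is continuous; the compactness step is invalid. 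So your decision to use curvature is not a blemish but a necessity, and since the paper only ever applies the lemma inside metric buildings and their links, which are \CAT{0} or \CAT{1}, your version proves everything that is actually needed.

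One repair is required in your key step, though. The fact you lean on --- geodesic triangles have angle sum at most $\pi$ --- is a \CAT{0} fact and fails in \CAT{1} spaces (spherical triangles have angle sum exceeding $\pi$), and the \CAT{1} case cannot be skipped, because the lemma is applied to links, which are spherical buildings. Moreover, even granting the angle-sum bound, ``the two remaining angles vanish'' does not by itself force degeneration: the Alexandrov angle is bounded \emph{above} by the comparison angle, so vanishing Alexandrov angles at $\Path_1(u)$ and $\Path_2(t)$ give no lower bound on $d(\Path_1(u),\Path_2(t))$. Both issues disappear if you argue directly at $p$ with the correct engine, namely that in a \CAT{\kappa} space the Alexandrov angle at a vertex of a geodesic triangle of perimeter less than $2D_\kappa$ is at most the corresponding comparison angle in the model space $M_\kappa^2$, see \cite[Proposition~II.1.7]{brihae}. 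For $u,t$ small the sides $[p,\Path_1(u)]$ and $[p,\Path_2(t)]$ are the unique geodesics, hence carry your hypothesized angle $\pi$; the comparison angle is then also $\pi$, the comparison triangle degenerates, and $d(\Path_1(u),\Path_2(t))=u+t$ exactly, which is what the local-geodesic condition demands.
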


\begin{proof}
We only have to show that it is a local geodesic in $0$. If there are $t_1 \in [-a_1,0]$ and $t_2 \in [0,a_2]$ such that $t_2 - t_1 = d(\Path(t_2),\Path(t_1))$, then we are done: in fact, if $\tau_1 \in [t_1,0]$ and $\tau_2 \in [0,t_2]$ are arbitrary (if $\tau_1$ and $\tau_2$ have same sign, then we can just use that $\Path_1$ and $\Path_2$ are geodesics) we have
\begin{eqnarray*}
t_2 - t_1 & = & d(\Path(t_1),\Path(t_2))\\
& \le & d(\Path(t_1),\Path(\tau_1)) + d(\Path(\tau_1),\Path(\tau_2)) + d(\Path(\tau_2),\Path(t_2))\\
& = & (\tau_1 - t_1) + d(\Path(\tau_1),\Path(\tau_2)) + (t_2 - \tau_2)
\end{eqnarray*}
from which we deduce $d(\Path(\tau_1),\Path(\tau_2)) \ge \tau_2 - \tau_1$ and the converse follows from the triangle inequality:
\[
d(\Path(\tau_1),\Path(\tau_2)) \le d(\Path(\tau_1),\Path(0)) + d(\Path(0),\Path(\tau_2)) = \tau_2 - \tau_1 \text{ .}
\]
So if $\Path$ were not a local geodesic, then the map $[-a_1,0] \times [0,a_2] \to \R$ that takes $(t_1,t_2)$ to $t_2-t_1 - d(\Path(t_2),\Path(t_1))$ would have to be nonzero. By compactness it would then have to be bounded away from $0$. But then $\angle_{\Path_1(0)}(\Path_1,\Path_2)$ could not be $\pi$.
\end{proof}

From now on we fix a metric building $\Space$, an apartment $\Apartment \subseteq \Space$, and a chamber $\Chamber \subseteq \Apartment$. Let $\kappa \defeq 1$ if $\Space$ is spherical and $\kappa \defeq 0$ if $\Space$ is euclidean, so that $\Space$ is \CAT{\kappa}. Let $\rho \defeq \Retraction{\Apartment}{\Chamber}$ be the retraction onto $\Apartment$ centered at $\Chamber$.

The main theorem of this section (Theorem~\ref{thm:convex_sets}) involves a technical condition on subsets $\ClosedSet$ of $\Apartment$ relative to $\Chamber$. To motivate it, assume first that $\Space$ is euclidean. If $\ClosedSet$ is closed and convex, then for every point $a \in \partial A$ there is a supporting hyperplane. In particular, there is a vector $n_a \in \Link{\Space}a$ such that every direction at $a$ that points into $A$ includes a nonobtuse angle with $n_a$. We say that $n_a$ is an \emph{anti-normal vector}. If in addition $\ClosedSet$ has nonempty interior, then $n_a$ can be chosen to point into $A$, that is, to lie in $\Link{A}a$. The additional condition that we impose is that it can also be chosen to point into the projection of $C$, see Figure~\ref{fig:normal_condition}:

\begin{figure}[htb]
\begin{center}
\includegraphics[scale=.5]{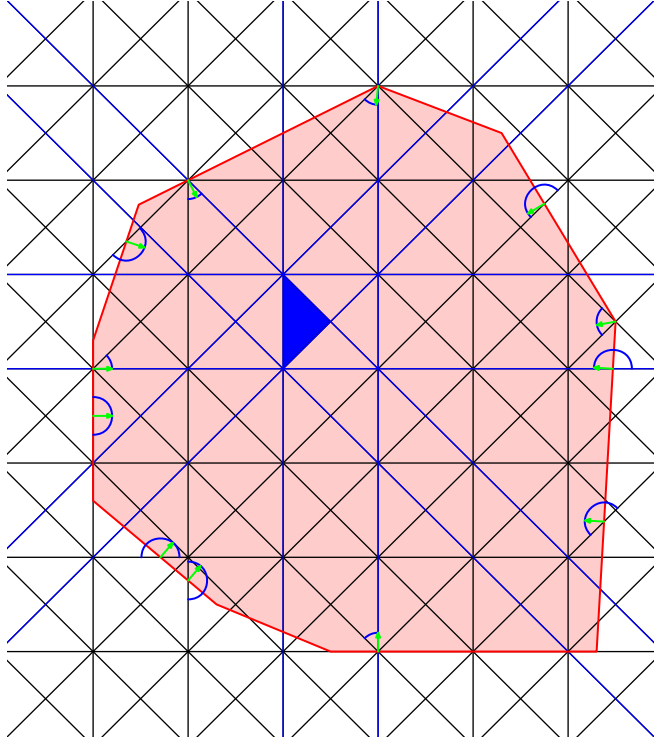}
\end{center}
\caption{A Coxeter complex of type $\tilde{B}_2$ with a distinguished chamber (blue). The set drawn in red satisfies the normal condition. At some boundary points an anti-normal vector (green) is drawn that points into the projection of $\Chamber$ to the link of that boundary point (blue).}
\label{fig:normal_condition}
\end{figure}

Formally, we say that a closed $D_\kappa$-convex subset $\ClosedSet$ of $\Apartment$ \emph{satisfies the normal condition (with respect to $\Chamber$)} if for every $a \in \partial A$ there is an $n_a \in \Link{\ClosedSet}a \intersect \Proj{\Link{}a}\Chamber$ such that $\Link{A}a \subseteq \CBall{}(n_\Point,\pi/2)$.

We start with some elementary observations which will facilitate the proof of Theorem~\ref{thm:convex_sets}.

\begin{obs}
\label{obs:apartment_not_distinguished}
Consider a set $\ClosedSet \subseteq \Apartment$ and set $\tilde{\ClosedSet} \defeq \rho^{-1}(\ClosedSet)$. If $\Apartment'$ is another apartment that contains $\Chamber$, then $\tilde{\ClosedSet} = \Retraction{\Apartment'}{\Chamber}^{-1}(\ClosedSet')$ where $\ClosedSet' \defeq \tilde{\ClosedSet} \intersect \Apartment'$ is the isometric image of $\ClosedSet$ under $\Retraction{\Apartment'}{\Chamber}$.
\end{obs}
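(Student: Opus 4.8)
The plan is to reduce the whole statement to the composition and restriction properties of retractions recorded in Fact~\ref{fact:buildings}~(\ref{item:retractions}). Write $\rho' \defeq \Retraction{\Apartment'}{\Chamber}$. Since both $\Apartment$ and $\Apartment'$ contain $\Chamber$, that fact supplies the two identities $\rho \circ \rho' = \rho$ and $\rho' \circ \rho = \rho'$, together with $\rho|_\Apartment = \id_\Apartment$, $\rho'|_{\Apartment'} = \id_{\Apartment'}$, and the assertion that the restrictions $\rho|_{\Apartment'}$ and $\rho'|_\Apartment$ are isometries. Everything will follow formally from these.

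First I would observe that $\rho|_{\Apartment'} \colon \Apartment' \to \Apartment$ and $\rho'|_\Apartment \colon \Apartment \to \Apartment'$ are mutually inverse isometries. Indeed, for $y \in \Apartment$ the identity $\rho \circ \rho' = \rho$ and $\rho|_\Apartment = \id$ give $\rho(\rho'(y)) = \rho(y) = y$, and symmetrically $\rho' \circ \rho = \rho'$ and $\rho'|_{\Apartment'} = \id$ give $\rho'(\rho(x)) = x$ for $x \in \Apartment'$. In particular $\rho'(\ClosedSet)$ is an isometric copy of $\ClosedSet$ sitting inside $\Apartment'$, and $\rho(\rho'(\ClosedSet)) = \ClosedSet$.

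Next I would prove the two set equalities. For $\ClosedSet' = \rho'(\ClosedSet)$: a point $x \in \Apartment'$ lies in $\tilde{\ClosedSet} = \rho^{-1}(\ClosedSet)$ exactly when $\rho(x) \in \ClosedSet$, and applying the inverse isometry $\rho'|_\Apartment$ this is equivalent to $x = \rho'(\rho(x)) \in \rho'(\ClosedSet)$; intersecting with $\Apartment'$ yields $\ClosedSet' = \tilde{\ClosedSet} \intersect \Apartment' = \rho'(\ClosedSet)$, which by the previous step is the claimed isometric image. For $\tilde{\ClosedSet} = (\rho')^{-1}(\ClosedSet')$ I would test an arbitrary building point $z$: using $\rho \circ \rho' = \rho$ and $\rho(\ClosedSet') = \ClosedSet$ shows $\rho'(z) \in \ClosedSet' \implies \rho(z) \in \ClosedSet$, while $\rho' \circ \rho = \rho'$ and $\rho'(\ClosedSet) = \ClosedSet'$ give the converse; hence $\rho(z) \in \ClosedSet \equiv \rho'(z) \in \ClosedSet'$, which is precisely $\rho^{-1}(\ClosedSet) = (\rho')^{-1}(\ClosedSet')$.

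I expect no genuine obstacle: the observation is a formal consequence of the two composition identities and the fact that each retraction fixes its own apartment. The only point that demands care is the bookkeeping — one must consistently distinguish $\rho$ from $\rho'$ and keep track of which apartment each one fixes, so that the chain of equivalences in the last step runs correctly in both directions. Everything else is direct substitution.
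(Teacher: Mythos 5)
Your proof is correct, and it uses exactly the ingredients the paper intends: the paper states this Observation without proof, taking it to be immediate from Fact~\ref{fact:buildings}~(\ref{item:retractions}), and your argument is precisely the formal verification from those properties (the two composition identities $\rho \circ \rho' = \rho$, $\rho' \circ \rho = \rho'$ and the fact that each retraction restricts to the identity on its own apartment and to an isometry on any apartment containing $\Chamber$). The deduction that $\rho|_{\Apartment'}$ and $\rho'|_\Apartment$ are mutually inverse isometries, and the two set equalities that follow, are all carried out correctly.
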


If $a \in \Space$ is a point, then $\rho$ induces a map $\Link{\Space}a \to \Link{\Apartment}\rho(a)$ given by taking the direction from $a$ to $x$ to the direction from $\rho(a)$ to $\rho(x)$. Let us denote this map by $\rho|_{\Link{\Space}a}$.

\begin{obs}
\label{obs:retraction_induces_retraction}
Let $a \in \Apartment$ and set $\overbar{\Apartment} \defeq \Link{\Apartment} a$ and $\overbar{\Chamber} \defeq \Proj{\Link{}a}\Chamber$. Then $\rho|_{\Link{}a} = \Retraction{\overbar{\Apartment}}{\overbar{\Chamber}}$.
\end{obs}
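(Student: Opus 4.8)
The plan is to recognize $\rho|_{\Link{}a}$ as a retraction of the spherical building $\Link{\Space}a$ (which is a spherical building by Fact~\ref{fact:buildings}) onto the apartment $\overbar{\Apartment} = \Link{\Apartment}a$, and then to pin down its center by checking the two properties that characterize a retraction onto an apartment centered at a chamber: that it restricts to the identity on $\overbar{\Apartment}$ and to an isometry on every apartment containing the center. Here one uses the standard fact that the apartments of $\Link{\Space}a$ are exactly the links $\Link{\Apartment'}a$ of apartments $\Apartment'$ of $\Space$ through $a$. Letting $\Cell$ denote the carrier of $a$, the apartment $\Apartment$ contains both $\Chamber$ and $\Cell$, hence contains $\Proj{\Cell}\Chamber$ by Fact~\ref{fact:buildings}, so $\overbar{\Chamber} = \Proj{\Link{}a}\Chamber$ is genuinely a chamber of $\overbar{\Apartment}$. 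Since a retraction onto a fixed apartment centered at a fixed chamber is unique, it suffices to verify the two properties for $\rho|_{\Link{}a}$ with respect to $\overbar{\Apartment}$ and $\overbar{\Chamber}$.

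The first property is immediate: because $\rho$ fixes $\Apartment$ pointwise, the induced map sends each direction $[a,x]_a$ with $x \in \Apartment$ to $[\rho(a),\rho(x)]_a = [a,x]_a$, so $\rho|_{\Link{}a}$ is the identity on $\overbar{\Apartment}$. For the second property, let $\overbar{\Apartment}'$ be an apartment of $\Link{\Space}a$ containing $\overbar{\Chamber}$, and write $\overbar{\Apartment}' = \Link{\Apartment'}a$ for an apartment $\Apartment'$ of $\Space$ through $a$; containing $\overbar{\Chamber}$ means precisely that $\Apartment'$ contains $\Proj{\Cell}\Chamber$. I want $\rho|_{\Link{}a}$ to restrict to an isometry $\overbar{\Apartment}' \to \overbar{\Apartment}$. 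The difficulty, and the main obstacle, is that Fact~\ref{fact:buildings} only guarantees $\rho$ to be an isometry on apartments of $\Space$ containing $\Chamber$, whereas $\Apartment'$ is only known to contain the projection $\Proj{\Cell}\Chamber$ and in general not $\Chamber$ itself.

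To overcome this I would invoke the gate property of the projection: every minimal gallery from $\Chamber$ into the star of $\Cell$ passes through $\Proj{\Cell}\Chamber$, which allows the star of $\Cell$ in $\Apartment'$ to be re-based to an apartment through $\Chamber$. Concretely, there is an apartment $\Apartment''$ of $\Space$ containing $\Chamber$ with $\Link{\Apartment''}a = \Link{\Apartment'}a = \overbar{\Apartment}'$; on $\Apartment''$ the map $\rho$ is an isometry by Fact~\ref{fact:buildings}, so the induced map is an isometry on $\Link{\Apartment''}a = \overbar{\Apartment}'$, establishing the second property. Being simultaneously the identity on $\overbar{\Apartment}$ and an isometry on $\overbar{\Apartment}'$, the map $\rho|_{\Link{}a}$ must agree on $\overbar{\Apartment}'$ with the unique isometry $\overbar{\Apartment}' \to \overbar{\Apartment}$ fixing $\overbar{\Apartment} \intersect \overbar{\Apartment}' \supseteq \overbar{\Chamber}$, which is exactly the restriction of $\Retraction{\overbar{\Apartment}}{\overbar{\Chamber}}$. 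As every chamber of $\Link{\Space}a$ lies in some apartment containing $\overbar{\Chamber}$, the two maps coincide everywhere, giving $\rho|_{\Link{}a} = \Retraction{\overbar{\Apartment}}{\overbar{\Chamber}}$.
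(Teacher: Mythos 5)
The paper states this Observation without proof---it is one of the ``elementary observations'' preceding Theorem~\ref{thm:convex_sets}---so there is no argument in the paper to compare yours against; judged on its own, your proof is correct and is essentially the argument the authors leave implicit. Your reduction to the two characterizing properties of $\Retraction{\overbar{\Apartment}}{\overbar{\Chamber}}$ is sound (uniqueness holds because an isometry between spherical apartments fixing a chamber pointwise is determined by it), and you correctly isolate the only step with real content: that a link apartment $\overbar{\Apartment}'$ containing $\overbar{\Chamber}$ equals $\Link{\Apartment''}a$ for some apartment $\Apartment''$ of $\Space$ containing $\Chamber$. Your appeal to the gate property is the right tool, but as written this is only a sketch; the full argument takes the chamber $G'$ of $\overbar{\Apartment}'$ opposite $\overbar{\Chamber}$, chooses an apartment $\Apartment''$ of $\Space$ containing $\Chamber$ and $G'$, uses gatedness together with convexity of apartments to see that $\Proj{\Cell}\Chamber \subseteq \Apartment''$ (where $\Cell$ is the carrier of $a$), and then uses that two opposite chambers of a spherical building lie in a unique apartment to conclude $\Link{\Apartment''}a = \overbar{\Apartment}'$. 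It is worth noting that you could bypass this re-basing entirely: given any chamber $D$ of $\Link{\Space}a$, pick an apartment $\Apartment''$ of $\Space$ containing $\Chamber$ and the chamber of $\Space$ corresponding to $D$; by Fact~\ref{fact:buildings}~\eqref{item:projections} it automatically contains $\Proj{\Cell}\Chamber$, so $\Link{\Apartment''}a$ is a link apartment containing both $D$ and $\overbar{\Chamber}$, and by Fact~\ref{fact:buildings}~\eqref{item:retractions} the map $\rho$ restricts to an isometry on $\Apartment''$ fixing $a$, whose induced map on links is exactly $\rho|_{\Link{}a}$ there, is an isometry onto $\overbar{\Apartment}$, and fixes $\overbar{\Chamber}$ pointwise---hence agrees with $\Retraction{\overbar{\Apartment}}{\overbar{\Chamber}}$ on $\Link{\Apartment''}a$. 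Letting $D$ range over all chambers of $\Link{\Space}a$ then gives the identity everywhere, using only apartments of $\Space$ through $\Chamber$ and the facts already recorded in the paper.
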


\begin{obs}
\label{obs:retraction_injective_on_projection}
Let $a \in \Space$ be arbitrary and let $a' \defeq \rho(a)$. For every $\alpha' \in \Proj{\Link{}a'} \Chamber$ there is a unique $\alpha \in \Proj{\Link{}a} \Chamber$ with $\rho|_{\Link{}a}(\alpha) = \alpha'$.
\end{obs}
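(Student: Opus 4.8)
The plan is to reduce the whole statement to a single apartment containing both $\Chamber$ and the carrier of $a$, where $\rho$ restricts to a cellular isometry, and then read off the claim as a bijection between two chambers of links. First I would name the relevant cells. Let $\Cell$ be the carrier of $a$ and set $D \defeq \Proj{\Cell}\Chamber$, the projection chamber supplied by Fact~\ref{fact:buildings}~\eqref{item:projections}; since $D \ge \Cell$ we have $a \in \overline{D}$, and $\Proj{\Link{}a}\Chamber$ is precisely the set $\Link{D}a$ of directions at $a$ pointing into $D$, a chamber of $\Link{\Space}a$. Because $\rho$ is a cellular isometry on any apartment containing $\Chamber$, the carrier of $a' = \rho(a)$ is $\Cell' \defeq \rho(\Cell)$, and the compatibility in Fact~\ref{fact:buildings}~\eqref{item:projections} gives $\Proj{\Cell'}\Chamber = \rho(\Proj{\Cell}\Chamber) \eqdef D'$. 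Hence $\Proj{\Link{}a'}\Chamber = \Link{D'}{a'}$, a chamber of $\Link{\Apartment}{a'}$.

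Next I would pass to a common apartment. By the building axioms there is an apartment $\Apartment''$ containing $\Chamber$ and $\Cell$; by Fact~\ref{fact:buildings}~\eqref{item:projections} it then also contains $D$, and by Fact~\ref{fact:buildings}~\eqref{item:retractions} the restriction $\rho|_{\Apartment''}$ is an isometric embedding of $\Apartment''$ into $\Apartment$, hence (both being copies of the same model space) an isometry onto $\Apartment$ taking $a$ to $a'$. Such an isometry induces an isometry $\Link{\Apartment''}a \to \Link{\Apartment}{a'}$ of links, and by the very definition of $\rho|_{\Link{}a}$ (sending the direction $[a,x]_{a}$ to $[a',\rho(x)]_{a'}$) this induced isometry coincides with the restriction of $\rho|_{\Link{}a}$ to $\Link{\Apartment''}a$. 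Since $D \subseteq \Apartment''$, the chamber $\Proj{\Link{}a}\Chamber = \Link{D}a$ sits inside $\Link{\Apartment''}a$ and is carried by this isometry bijectively onto $\Link{\rho(D)}{a'} = \Link{D'}{a'} = \Proj{\Link{}a'}\Chamber$.

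The two assertions then fall out of this single bijection. For uniqueness: any $\alpha \in \Proj{\Link{}a}\Chamber$ already lies in $\Link{\Apartment''}a$, on which $\rho|_{\Link{}a}$ is injective, so at most one such $\alpha$ can map to a given $\alpha'$. For existence: given $\alpha' \in \Proj{\Link{}a'}\Chamber = \Link{D'}{a'}$, its preimage under the induced link isometry lies in $\Link{D}a = \Proj{\Link{}a}\Chamber$ and maps to $\alpha'$ under $\rho|_{\Link{}a}$.

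The step I expect to be the real crux is guaranteeing that the preimage lands in the projection chamber $\Proj{\Link{}a}\Chamber$, not merely somewhere in $\Link{\Space}a$. This is exactly what the choice of $\Apartment''$ (containing both $\Chamber$ and $\Cell$, hence $D$) together with the identity $\rho(\Proj{\Cell}\Chamber) = \Proj{\rho(\Cell)}\Chamber$ of Fact~\ref{fact:buildings}~\eqref{item:projections} is there to provide: they align the source chamber $\Link{D}a$ with the target chamber $\Link{D'}{a'}$ under the link isometry. A secondary point, immediate from the definitions but worth stating explicitly, is that $\rho|_{\Link{}a}$ restricted to $\Link{\Apartment''}a$ really is the isometry of links induced by $\rho|_{\Apartment''}$, which is what licenses calling it a bijection there.
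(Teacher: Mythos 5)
Your proof is correct. The paper itself states this Observation without proof---it is one of the ``elementary observations'' preceding Theorem~\ref{thm:convex_sets}---and your argument supplies precisely the justification its toolkit suggests: pass to an apartment $\Apartment''$ containing $\Chamber$ and the carrier $\Cell$ of $a$ (hence also $\Proj{\Cell}\Chamber$), on which $\rho$ restricts to a cellular isometry by Fact~\ref{fact:buildings}~\eqref{item:retractions}, and match the two projection chambers via the compatibility $\rho(\Proj{\Cell}\Chamber) = \Proj{\rho(\Cell)}\Chamber$ of Fact~\ref{fact:buildings}~\eqref{item:projections}, so that the induced link isometry carries $\Proj{\Link{}a}\Chamber$ bijectively onto $\Proj{\Link{}a'}\Chamber$, giving both existence and uniqueness.
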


\begin{lem}
\label{lem:normal_condition_hereditary}
Let $\ClosedSet \subseteq \Apartment$ be closed and polyhedral. Let $a \in \partial A$ and set $\overbar{\Apartment} \defeq \Link{\Apartment} a$ and $\overbar{\Chamber} \defeq \Proj{\Link{}a}\Chamber$. If $\ClosedSet$ satisfies the normal condition, then $\overbar{\ClosedSet}$ satisfies the normal condition (with respect to $\overbar{\Chamber}$).
\end{lem}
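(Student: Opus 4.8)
The plan is to verify the normal condition for $\overbar{\ClosedSet} \defeq \Link{\ClosedSet} a$ in $\overbar{\Apartment} = \Link{\Apartment} a$ directly from the definition, transporting anti-normal vectors from $\ClosedSet$ into its link at $a$. First I would record that $\overbar{\ClosedSet}$ is closed, polyhedral and $\pi$-convex in $\overbar{\Apartment}$, so that the normal condition with respect to $\overbar{\Chamber}$ is meaningful: closedness and polyhedrality come from Proposition~\ref{prop:local_convexity_of_polyhedral_sets}(\ref{item:polyhedral_hereditary}), and $\pi$-convexity from part~(\ref{item:local_convexity_of_polyhedral_sets}) together with the fact that the $D_\kappa$-convex set $\ClosedSet$ is locally convex. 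Now fix a boundary direction $\overbar{a} \in \partial\overbar{\ClosedSet}$. Since $a$ is a cone point of $\ClosedSet$, for small $\varepsilon$ the set $\ClosedSet \intersect \Ball{}(a,\varepsilon)$ is the cone over $\overbar{\ClosedSet}$, whose boundary is the cone over $\partial\overbar{\ClosedSet}$; hence the point $b$ at distance $\varepsilon/2$ from $a$ in the direction $\overbar{a}$ lies in $\partial A$.

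The key geometric observation is that, because both $\Apartment$ and $\ClosedSet$ are locally cones over their links at $a$ and $b$ is an interior point of the ray issuing from $a$ in direction $\overbar{a}$, the links at $b$ are spherical suspensions
\[
\Link{\Apartment} b \cong \operatorname{susp}\big(\Link{\overbar{\Apartment}}\overbar{a}\big), \qquad \Link{\ClosedSet} b \cong \operatorname{susp}\big(\Link{\overbar{\ClosedSet}}\overbar{a}\big),
\]
with common suspension poles the two radial directions $[b,a]_b$ and its antipode, and common equator $\Link{\overbar{\Apartment}}\overbar{a} \supseteq \Link{\overbar{\ClosedSet}}\overbar{a}$. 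I write $\pi_{eq}$ for the equatorial projection sending a non-polar direction to the point where the geodesic through it from pole to pole meets the equator. Applying the normal condition for $\ClosedSet$ at the boundary point $b$ yields $n_b \in \Link{\ClosedSet} b \intersect \Proj{\Link{} b}\Chamber$ with $\Link{\ClosedSet} b \subseteq \CBall{}(n_b,\pi/2)$. Both poles lie in $\Link{\ClosedSet} b$ and are at distance $\pi$, so $n_b$ cannot be a pole; therefore $n_{\overbar{a}} \defeq \pi_{eq}(n_b)$ is well-defined and lies in the equator $\Link{\overbar{\ClosedSet}}\overbar{a}$, giving the first part of the normal condition.

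For the containment $\Link{\overbar{\ClosedSet}}\overbar{a} \subseteq \CBall{}(n_{\overbar{a}},\pi/2)$ I would use the join distance formula on the suspension: writing $\lambda \in [0,\pi/2)$ for the angle of $n_b$ off the equator, every equatorial $q$ satisfies $\cos d(q,n_b) = \cos\lambda \cdot \cos d(q,n_{\overbar{a}})$. Since $\cos\lambda > 0$ and each $q \in \Link{\overbar{\ClosedSet}}\overbar{a} \subseteq \Link{\ClosedSet} b$ has $d(q,n_b) \le \pi/2$, we get $\cos d(q,n_{\overbar{a}}) = \cos d(q,n_b)/\cos\lambda \ge \cos d(q,n_b) \ge 0$, hence $d(q,n_{\overbar{a}}) \le d(q,n_b) \le \pi/2$. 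This is a routine spherical computation on the $2$-sphere spanned by the poles, $q$ and $n_b$.

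The remaining point, and the main obstacle, is to check $n_{\overbar{a}} \in \Proj{\Link{\overbar{\Apartment}}\overbar{a}}\overbar{\Chamber}$, i.e.\ that $\pi_{eq}$ carries the chamber-direction cone $\Proj{\Link{} b}\Chamber$ at $b$ onto the chamber-direction cone of $\overbar{\Chamber} = \Proj{\Link{} a}\Chamber$ at $\overbar{a}$ inside $\overbar{\Apartment}$. This is pure building combinatorics: writing $\sigma \le \tau$ for the carriers of $a$ and $b$, it amounts to the compatibility of the projections $\Proj{\sigma}\Chamber \le \Proj{\tau}\Chamber$ with passage to the link at $a$, which I would deduce from the gate property and the associativity of projections in Fact~\ref{fact:buildings}(\ref{item:projections}), applied in the spherical building $\Link{\Space} a$ whose relevant retraction is $\rho|_{\Link{} a} = \Retraction{\overbar{\Apartment}}{\overbar{\Chamber}}$ by Observation~\ref{obs:retraction_induces_retraction}. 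Once this identification is in place, $n_b \in \Proj{\Link{} b}\Chamber$ forces $n_{\overbar{a}} \in \Proj{\Link{\overbar{\Apartment}}\overbar{a}}\overbar{\Chamber}$, and the verification of the normal condition for $\overbar{\ClosedSet}$ is complete. I expect the bookkeeping of this last step, rather than the geometry, to be where the real work lies.
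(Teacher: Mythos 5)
Your construction agrees with the paper's own: its very short proof takes the anti\dash{}normal vector $n_b$ at a boundary point $b$ of $\ClosedSet$ lying at distance $\varepsilon/2$ from $a$ in the direction $\overbar{a}$, moves from $b$ a short distance along $n_b$ to a point $c$, and declares $n_{\overbar{a}}$ to be the direction at $\overbar{a}$ towards $[a,c]_a$ --- which is exactly your equatorial projection $\pi_{eq}(n_b)$ --- so your suspension argument is the same construction together with the verifications that the paper leaves implicit. The one step you flag as the main obstacle is in fact the easiest and needs no gate\dash{}property bookkeeping: after shrinking $\varepsilon$ so that $b$ lies in the open star of $a$, every wall containing $b$ contains the carrier of $b$, hence that of $a$, hence the whole segment $[a,b]$, and these walls trace out on $\Link{\Apartment}a$ exactly the walls of $\overbar{\Apartment}$ through $\overbar{a}$ with matching $\Chamber$- and $\overbar{\Chamber}$-sides; writing $n_b = \cos\lambda \, n_{\overbar{a}} \pm \sin\lambda \, [b,a]_b$ with $\cos\lambda > 0$, the radial component is parallel to each such wall, so by the wall characterization of projections recalled at the beginning of Section~\ref{sec:alternative_proof} (a direction lies in $\Proj{\Link{}a}\Chamber$ if and only if it points into the $\Chamber$-side of every wall through the point), $n_b \in \Proj{\Link{}b}\Chamber$ immediately yields $n_{\overbar{a}} \in \Proj{\Link{\overbar{\Apartment}}\overbar{a}}\overbar{\Chamber}$.
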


\begin{proof}
Let $\varepsilon > 0$ be such that the cone condition \eqref{eq:cone_condition} is satisfied. Let $\beta \in \partial \overbar{\ClosedSet}$. Let $b \in \partial \ClosedSet$ be a point in direction $\beta$ from $a$ at distance at most $\varepsilon/2$. Let $c$ be a point in direction $n_b$ from $b$ at distance at most $\varepsilon/2$. Let $\gamma \in \Link{\Space}a$ be the direction from $a$ to $c$. Then the direction from $\beta$ to $\gamma$ is a possible $n_\beta$.
\end{proof}

\begin{thm}
\label{thm:convex_sets}
Let $\Space$ be a locally compact metric building and let $\kappa \defeq 1$ if $\Space$ is spherical and $\kappa \defeq 0$ if $\Space$ is euclidean. Let $\Apartment \subseteq \Space$ be an apartment and let $\Chamber \subseteq \Apartment$ be a chamber. Let $\ClosedSet \subseteq \Apartment$ be closed, polyhedral, $D_\kappa$-convex and assume $\ClosedSet \intersect \Chamber \ne \emptyset$ and that $\ClosedSet$ satisfies the normal condition.
Then $\tilde{\ClosedSet} \defeq \Retraction{\Apartment}{\Chamber}^{-1}(\ClosedSet)$ is locally convex.
\end{thm}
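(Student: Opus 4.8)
The plan is to prove local convexity of $\tilde{\ClosedSet}$ by induction on the dimension of $\Space$, using the link criterion of Proposition~\ref{prop:local_convexity_of_polyhedral_sets}. Since $\rho$ is continuous and $\ClosedSet$ is closed, $\tilde{\ClosedSet} = \rho^{-1}(\ClosedSet)$ is closed. I would first check that $\tilde{\ClosedSet}$ is polyhedral: near a point $\Point \in \tilde{\ClosedSet}$ the retraction $\rho$ is modelled on the induced link map $\rho|_{\Link{\Space}\Point}$ (each chamber of the star of $\Point$ lies in a common apartment with $\Chamber$, on which $\rho$ is an isometry), so a geodesic germ from $\Point$ in direction $\xi$ is mapped isometrically to the germ in direction $\rho|_{\Link{\Space}\Point}(\xi)$; hence the cone condition \eqref{eq:cone_condition} for $\ClosedSet$ at $\rho(\Point)$ pulls back to the cone condition for $\tilde{\ClosedSet}$ at $\Point$, and moreover $\Link{\tilde{\ClosedSet}}\Point = (\rho|_{\Link{\Space}\Point})^{-1}(\Link{\ClosedSet}{\rho(\Point)})$. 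By Proposition~\ref{prop:local_convexity_of_polyhedral_sets}~(\ref{item:local_convexity_of_polyhedral_sets}) it then suffices to show, for every $\Point \in \tilde{\ClosedSet}$, that $\Link{\tilde{\ClosedSet}}\Point$ is $\pi$-convex in the spherical building $\Link{\Space}\Point$.

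Fix such a $\Point$. Using Observation~\ref{obs:apartment_not_distinguished} I would replace $\Apartment$ by an apartment $\Apartment'$ containing $\Chamber$ and the carrier of $\Point$; since $\ClosedSet' = \tilde{\ClosedSet} \intersect \Apartment'$ is an isometric copy of $\ClosedSet$ inheriting all hypotheses, this lets me assume $\Point \in \Apartment$, so $\rho(\Point) = \Point$. Writing $\overbar{\Space} \defeq \Link{\Space}\Point$, $\overbar{\Apartment} \defeq \Link{\Apartment}\Point$, $\overbar{\Chamber} \defeq \Proj{\Link{}\Point}\Chamber$ and $\overbar{\ClosedSet} \defeq \Link{\ClosedSet}\Point$, Observation~\ref{obs:retraction_induces_retraction} identifies $\rho|_{\Link{}\Point}$ with the retraction $\Retraction{\overbar{\Apartment}}{\overbar{\Chamber}}$ of $\overbar{\Space}$, and the local model above gives $\Link{\tilde{\ClosedSet}}\Point = \Retraction{\overbar{\Apartment}}{\overbar{\Chamber}}^{-1}(\overbar{\ClosedSet})$. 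If $\Point$ lies in the interior of $\ClosedSet$ then $\overbar{\ClosedSet} = \overbar{\Apartment}$ and the preimage is all of $\overbar{\Space}$, which is trivially $\pi$-convex; so I may assume $\Point \in \partial \ClosedSet$.

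Next I would verify that $\overbar{\ClosedSet}$ satisfies, in $\overbar{\Space}$, the hypotheses of the theorem one dimension lower: it is closed and polyhedral by Proposition~\ref{prop:local_convexity_of_polyhedral_sets}~(\ref{item:polyhedral_hereditary}); it is $\pi$-convex by Proposition~\ref{prop:local_convexity_of_polyhedral_sets}~(\ref{item:local_convexity_of_polyhedral_sets}) applied to the locally convex set $\ClosedSet$; the anti-normal $n_\Point$ furnished by the normal condition lies in $\overbar{\ClosedSet} \intersect \overbar{\Chamber}$; and $\overbar{\ClosedSet}$ satisfies the normal condition with respect to $\overbar{\Chamber}$ by Lemma~\ref{lem:normal_condition_hereditary}. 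The induction hypothesis then shows that $\Retraction{\overbar{\Apartment}}{\overbar{\Chamber}}^{-1}(\overbar{\ClosedSet}) = \Link{\tilde{\ClosedSet}}\Point$ is locally convex in $\overbar{\Space}$.

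The key step, which I expect to be the main obstacle, is to upgrade this local convexity to $\pi$-convexity, and here the normal condition enters decisively through the single anti-normal. The condition gives $n_\Point \in \overbar{\ClosedSet} \intersect \overbar{\Chamber}$ with $\overbar{\ClosedSet} \subseteq \CBall{}(n_\Point,\pi/2)$, and since $\overbar{\ClosedSet}$ is $\pi$-convex this makes $\overbar{\ClosedSet}$ star-shaped about $n_\Point$ with all radii at most $\pi/2$. I claim $\Link{\tilde{\ClosedSet}}\Point = \Retraction{\overbar{\Apartment}}{\overbar{\Chamber}}^{-1}(\overbar{\ClosedSet})$ is likewise star-shaped about $n_\Point$: given $\tilde x$ in the preimage, choose an apartment $\Apartment''$ of $\overbar{\Space}$ containing $\overbar{\Chamber}$ and the carrier of $\tilde x$; as $n_\Point \in \overbar{\Chamber}$ it lies in $\Apartment''$, and $\Retraction{\overbar{\Apartment}}{\overbar{\Chamber}}$ restricts on $\Apartment''$ to an isometry fixing $n_\Point$ (Fact~\ref{fact:buildings}~(\ref{item:retractions})), so it carries $[n_\Point,\tilde x]$ isometrically onto $[n_\Point,\Retraction{\overbar{\Apartment}}{\overbar{\Chamber}}(\tilde x)] \subseteq \overbar{\ClosedSet}$; hence $[n_\Point,\tilde x]$ lies in the preimage and has length at most $\pi/2$. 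Star-shapedness about $n_\Point$ makes $\Link{\tilde{\ClosedSet}}\Point$ connected with length-metric diameter at most $\pi$. As $\overbar{\Space}$ is a complete, locally compact \CAT{1}-space, Theorem~\ref{thm:local_convexity} (with $\kappa = 1$, $D_1 = \pi$) then shows that the closed, connected, locally convex set $\Link{\tilde{\ClosedSet}}\Point$ is $\pi$-convex, closing the induction. The two points demanding the most care are the local identification $\Link{\tilde{\ClosedSet}}\Point = \Retraction{\overbar{\Apartment}}{\overbar{\Chamber}}^{-1}(\overbar{\ClosedSet})$ (equivalently, polyhedrality of $\tilde{\ClosedSet}$) and the existence of the common apartment $\Apartment''$ through $\overbar{\Chamber}$ and $\tilde x$ on which the retraction is an isometry.
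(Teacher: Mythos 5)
Your proposal is correct and follows essentially the same route as the paper's own proof: induction on dimension via the link criterion of Proposition~\ref{prop:local_convexity_of_polyhedral_sets}, reduction to an apartment containing $\Chamber$ and the point via Observations~\ref{obs:apartment_not_distinguished}--\ref{obs:retraction_injective_on_projection}, verification of the inductive hypotheses using Proposition~\ref{prop:local_convexity_of_polyhedral_sets}~\eqref{item:polyhedral_hereditary} and Lemma~\ref{lem:normal_condition_hereditary}, and the final upgrade from local convexity to $\pi$-convexity via the bound $d(\gamma,n_a)\le\pi/2$ together with Theorem~\ref{thm:local_convexity}. You even make explicit two points the paper leaves implicit, namely the polyhedrality of $\tilde{\ClosedSet}$ and the star-shapedness argument behind the length-metric diameter bound.
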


The following example illustrates why the normal condition is necessary.

\begin{exmpl}
Let $\Space$ be a thick building of type $\tilde{A}_2$ and pick an apartment $\Apartment$ and a chamber $\Chamber \subseteq \Apartment$ arbitrarily. Let $\ClosedSet$ be a geodesic segment that passes through one vertex of $\Chamber$, call it $\Point$, and meets the opposite edge perpendicularly. Then $\tilde{\ClosedSet} = \Retraction{\Apartment}{\Chamber}^{-1}(\ClosedSet)$ is not locally convex: there are two chambers, say $D$ and $D'$, that are opposite $\Chamber$ in $\Link\Point$ and adjacent to each other. Let $\AltPoint$ and $\AltPoint'$ be interior points of $D$ respectively $D'$ that lie in $\tilde\ClosedSet$. Then the directions from $\Point$ to $\AltPoint$ and from $\Point$ to $\AltPoint'$ include an angle of $\pi/3$ (as can be easily seen in an apartment that contains $D$ and $D'$). Hence the geodesic from $\AltPoint$ to $\AltPoint'$ leaves $\ClosedSet$.
\end{exmpl}

\begin{proof}
The proof is by induction on the dimension. For $\dim \Space = 0$ there is nothing to show.

For any apartment $\Apartment'$ that contains $\Chamber$ we may consider the set $\ClosedSet' \defeq \tilde{\ClosedSet} \intersect \Apartment'$ and have $\tilde{\ClosedSet} = \Retraction{\Apartment'}{\Chamber}^{-1}(\ClosedSet')$ by Observation~\ref{obs:apartment_not_distinguished}. What is more, if $a' \in \partial A'$, then $\Retraction{\Apartment}{\Chamber}(a') \eqdef a$ has a direction $n_{a}$ as in the normal condition and by Observation~\ref{obs:retraction_injective_on_projection} this gives rise to a direction $n_{a'} \in \Link{\Apartment'}a'$ showing that $A'$ satisfies the normal condition. This shows that instead of $\Apartment$ we may consider any apartment that contains $\Chamber$ and the hypotheses of the theorem are satisfied.

To show that $\tilde{\ClosedSet}$ is locally convex, it suffices by  Proposition~\ref{prop:local_convexity_of_polyhedral_sets}~\eqref{item:local_convexity_of_polyhedral_sets} to show that $\Link{\tilde{\ClosedSet}}a$ is $\pi$-convex for every $a \in \partial \tilde{\ClosedSet}$.

So let $a \in \partial\tilde{\ClosedSet}$. By our above discussion we may assume that $a \in \Apartment$. Let $\overbar{\ClosedSet} \defeq \Link{\ClosedSet}a$, $\overbar{\Apartment} \defeq \Link{\Apartment}a$, and $\overbar{\Chamber} \defeq \Proj{\Link{}a}\Chamber$. By Observation~\ref{obs:retraction_induces_retraction} we have $\Link{\tilde{\ClosedSet}}a = \Retraction{\overbar{\Apartment}}{\overbar{\Chamber}}^{-1}(\overbar{\ClosedSet})$. We want to apply the induction hypothesis to show that $\Retraction{\overbar{\Apartment}}{\overbar{\Chamber}}^{-1}(\overbar{\ClosedSet})$ is locally convex. To do so, we have to verify that the hypotheses are met.

First, $\overbar{\ClosedSet}$ is closed and polyhedral by Proposition~\ref{prop:local_convexity_of_polyhedral_sets}~\eqref{item:polyhedral_hereditary}. It is $\pi$-convex by Proposition~\ref{prop:local_convexity_of_polyhedral_sets}~\eqref{item:local_convexity_of_polyhedral_sets}. Next, $n_a \in \overbar{\Chamber} \intersect \overbar{\ClosedSet}$, so that this intersection is nonempty. Finally, $\overbar{\ClosedSet}$ satisfies the normal condition by Lemma~\ref{lem:normal_condition_hereditary}. So we can indeed apply the theorem inductively and get that $\Link{\tilde{\ClosedSet}}a$ is locally convex.

To see that it is $\pi$-convex, using Theorem~\ref{thm:local_convexity}, it remains to show that the diameter of $\Link{\tilde{\ClosedSet}}a$ is at most $\pi$ in the length metric. We claim that in fact every direction $\gamma$ in $\Link{\tilde{\ClosedSet}}a$ has distance at most $\pi/2$ from $n_a$. This is clear by choosing an apartment that contains $\gamma$ and $\Chamber$ (which will automatically contain $n_a \in \Proj{\Link{}a}\Chamber$).
\end{proof}

We get the following special case for euclidean buildings.

\begin{thm}
\label{thm:convex_sets_euclidean}
Let $\Space$ be a locally compact euclidean building, let $\Apartment \subseteq \Space$ be an apartment and let $\Chamber \subseteq \Apartment$ be a chamber. Let $\ClosedSet \subseteq \Apartment$ be closed, polyhedral, convex and assume that $\ClosedSet \intersect \Chamber \ne \emptyset$ and that $\ClosedSet$ satisfies the normal condition. Then $\tilde{\ClosedSet} \defeq \Retraction{\Apartment}{\Chamber}^{-1}(\ClosedSet)$ is convex.
\end{thm}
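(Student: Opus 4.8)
The plan is to read Theorem~\ref{thm:convex_sets_euclidean} as the euclidean specialization of Theorem~\ref{thm:convex_sets}, and to upgrade the resulting local convexity to genuine convexity by the criterion of Theorem~\ref{thm:local_convexity}. Since $\Space$ is euclidean we have $\kappa = 0$, hence $D_\kappa = \infty$, so $D_\kappa$-convexity is just convexity and the diameter hypothesis is vacuous. Thus $\ClosedSet$ is closed, polyhedral, $D_\kappa$-convex, meets $\Chamber$, and satisfies the normal condition, so Theorem~\ref{thm:convex_sets} applies and shows that $\tilde{\ClosedSet} \defeq \Retraction{\Apartment}{\Chamber}^{-1}(\ClosedSet)$ is locally convex. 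A locally compact euclidean building is a complete, locally compact \CAT{0}-space, so the only remaining hypotheses of Theorem~\ref{thm:local_convexity} to verify are that $\tilde{\ClosedSet}$ is closed and connected.

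Closedness is immediate: the retraction $\Retraction{\Apartment}{\Chamber}$ is distance-nonincreasing, hence continuous, so $\tilde{\ClosedSet}$ is the preimage of the closed set $\ClosedSet$ under a continuous map.

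The main point, and the step I expect to require genuine care, is connectedness, and here I would use the hypothesis $\ClosedSet \intersect \Chamber \ne \emptyset$. Fix a point $\SpecialPoint \in \ClosedSet \intersect \Chamber$; since $\SpecialPoint \in \Chamber \subseteq \Apartment$ and $\Retraction{\Apartment}{\Chamber}$ fixes $\Apartment$ pointwise, we have $\SpecialPoint \in \tilde{\ClosedSet}$. Given an arbitrary $\Point \in \tilde{\ClosedSet}$, choose an apartment $\Apartment'$ containing both $\Chamber$ and $\Point$, which is possible since any point and chamber lie in a common apartment. By Observation~\ref{obs:apartment_not_distinguished} the set $\ClosedSet' \defeq \tilde{\ClosedSet} \intersect \Apartment'$ is the image of $\ClosedSet$ under the isometry $\Retraction{\Apartment'}{\Chamber}|_\Apartment$ and is therefore convex in $\Apartment'$, which is itself isometric to a euclidean space. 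Both $\Point$ and $\SpecialPoint$ lie in $\ClosedSet' = \tilde{\ClosedSet} \intersect \Apartment'$, the latter because $\SpecialPoint \in \tilde{\ClosedSet}$ and $\SpecialPoint \in \Chamber \subseteq \Apartment'$. Hence the geodesic $[\Point,\SpecialPoint]$ taken inside $\Apartment'$ stays in the convex set $\ClosedSet' \subseteq \tilde{\ClosedSet}$, giving a path in $\tilde{\ClosedSet}$ from $\Point$ to the fixed point $\SpecialPoint$. As $\Point$ was arbitrary, $\tilde{\ClosedSet}$ is path-connected, and in particular connected.

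With $\tilde{\ClosedSet}$ shown to be closed, connected, and locally convex inside the complete, locally compact \CAT{0}-space $\Space$, Theorem~\ref{thm:local_convexity} (applied with $\kappa = 0$) yields that $\tilde{\ClosedSet}$ is convex. Everything apart from the connectedness argument is a direct appeal to the results already established, so that argument is where I would concentrate the work; one could alternatively invoke Theorem~\ref{thm:local_convexity_cat0} in place of Theorem~\ref{thm:local_convexity}, dropping even the use of local compactness in the final step.
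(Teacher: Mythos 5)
Your proposal is correct and follows essentially the same route as the paper: apply Theorem~\ref{thm:convex_sets} (with $\kappa=0$) for local convexity, establish connectedness by joining an arbitrary point of $\tilde{\ClosedSet}$ to a fixed point of $\ClosedSet \intersect \Chamber$ inside a common apartment via Observation~\ref{obs:apartment_not_distinguished}, and then invoke the convexity criterion of Section~\ref{sec:local_convexity}. The only (immaterial) difference is that the paper concludes with Theorem~\ref{thm:local_convexity_cat0} rather than Theorem~\ref{thm:local_convexity}, an alternative you yourself point out.
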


\begin{proof}
We can apply Theorem~\ref{thm:convex_sets} to obtain that $\tilde{\ClosedSet}$ is locally convex. It is also connected because $\ClosedSet$ is connected and $\ClosedSet \intersect \Chamber \ne \emptyset$: Let $\Point \in \ClosedSet \intersect \Chamber$. If $\AltPoint \in \tilde{\ClosedSet}$ is arbitrary let $\Apartment'$ be an apartment that contains $\Chamber$ and $\AltPoint$. By Observation~\ref{obs:apartment_not_distinguished} there is a path from $\AltPoint$ to $\Point$ in $\tilde{\ClosedSet}$. So we can deduce from Theorem~\ref{thm:local_convexity_cat0} that $\tilde{\ClosedSet}$ is convex.
\end{proof}

We now collect the basic facts that are needed to formulate Theorem~\ref{thm:convex_sets_euclidean} with $\Chamber$ replaced by a chamber at infinity.
If $\Space$ is a euclidean building, then the visual boundary $\Infty\Space$ is a spherical building, called the \emph{building at infinity}. Its apartments are the visual boundaries of apartments of $\Space$. We have the following parallel to Fact~\ref{fact:buildings}.

\begin{fact}
Let $\Space$ be a euclidean building.
\label{fact:euclidean_buildings}
\begin{enumerate}
\item Given an apartment $\Apartment$ of $\Space$ and a chamber at infinity $\ChamberAtInfinity \subseteq \Infty\Apartment$ there is a map $\Retraction{\Apartment}{\ChamberAtInfinity} \colon \Space \to \Apartment$ called the \emph{retraction onto $\Apartment$ centered at $\ChamberAtInfinity$} such that $\Retraction{\Apartment}{\ChamberAtInfinity}|_\Apartment = \id_\Apartment$ and $\Retraction{\Apartment}{\ChamberAtInfinity}|_{\Apartment'}$ is an isometry for every apartment $\Apartment'$ that contains $\ChamberAtInfinity$ in its boundary. Moreover, $\Retraction{\Apartment}{\ChamberAtInfinity} \circ \Retraction{\Apartment'}{\ChamberAtInfinity} = \Retraction{\Apartment}{\ChamberAtInfinity}$.\label{item:retractions_from_infinity}
\item Let $\ChamberAtInfinity \subseteq \Infty\Space$ be a chamber at infinity and let $\Cell \subseteq \Space$ be an abritrary cell. There is a unique chamber $\Proj{\Cell}\ChamberAtInfinity \ge \Cell$, called the \emph{projection of $\ChamberAtInfinity$ onto $\Cell$} such that every apartment that contains $\ChamberAtInfinity$ and $\Cell$ also contains $\Proj{\Cell}\ChamberAtInfinity$. If $\Apartment$ is an apartment that contains $\ChamberAtInfinity$ in its boundary, then $\Proj{\Retraction{\Apartment}{\ChamberAtInfinity}\Cell}\ChamberAtInfinity = \Retraction{\Apartment}{\ChamberAtInfinity}(\Proj{\Cell}\ChamberAtInfinity)$.\label{item:projections_from_infinity}
\end{enumerate}
\end{fact}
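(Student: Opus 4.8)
The plan is to obtain both parts from the finite-center versions in Fact~\ref{fact:buildings} by pushing the center out to infinity along a sector representing $\ChamberAtInfinity$, so that the retractions and projections already available stabilize to the ones at infinity. Two standard building-theoretic inputs are used (see \cite{abrbro}): first, any chamber $D$ of $\Space$ and any sector $S$ lie in a common apartment after $S$ is replaced by a subsector $S' \subseteq S$; and second, two apartments that both contain $\ChamberAtInfinity$ in their boundary share a subsector representing $\ChamberAtInfinity$, and the affine isomorphism between them that fixes this subsector pointwise is uniquely determined.

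For part~\eqref{item:retractions_from_infinity} I fix a sector $S \subseteq \Apartment$ with $\Infty S = \ChamberAtInfinity$ and chambers $\Chamber_n \subseteq S$ whose carriers march out to infinity along a fixed interior ray of $S$; I then consider the finite-center retractions $\rho_n \defeq \Retraction{\Apartment}{\Chamber_n}$ from Fact~\ref{fact:buildings}~\eqref{item:retractions}. The key step is to show that $\rho_n$ stabilizes pointwise, uniformly on bounded sets. Given a chamber $D$, the first input yields an apartment $\Apartment'$ containing $D$ and a subsector $S' \subseteq S$; since the $\Chamber_n$ march out along an interior ray they lie in $S'$ for $n$ large, so $\rho_n|_{\Apartment'}$ is the isomorphism $\Apartment' \to \Apartment$ fixing $\Chamber_n$, which by the second input is the isomorphism fixing $\Apartment \cap \Apartment' \supseteq S'$ pointwise and hence is independent of $n$. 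Defining $\Retraction{\Apartment}{\ChamberAtInfinity} \defeq \lim_n \rho_n$ then produces a map that fixes $\Apartment$ and restricts on each apartment $\Apartment'$ with $\ChamberAtInfinity \subseteq \Infty{\Apartment'}$ to the isomorphism onto $\Apartment$ fixing a common subsector; in particular this restriction is an isometry. The composition identity follows by choosing the $\Chamber_n$ inside a common subsector of $\Apartment$ and $\Apartment'$ and passing to the limit in the identity $\Retraction{\Apartment}{\Chamber_n} \circ \Retraction{\Apartment'}{\Chamber_n} = \Retraction{\Apartment}{\Chamber_n}$ of Fact~\ref{fact:buildings}~\eqref{item:retractions}.

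For part~\eqref{item:projections_from_infinity} I observe that the finite projection $\Proj{\Cell}\Chamber_n$ of Fact~\ref{fact:buildings}~\eqref{item:projections} depends only on the direction from $\Cell$ towards $\Chamber_n$ inside $\Link{\Space}\Cell$, and that this direction stabilizes as the $\Chamber_n$ march out along $S$; I define $\Proj{\Cell}\ChamberAtInfinity$ to be the resulting eventual chamber. Uniqueness and apartment-independence reduce to the finite case: any apartment containing $\Cell$ and having $\ChamberAtInfinity$ in its boundary contains a subsector representing $\ChamberAtInfinity$, hence contains $\Chamber_n$ for $n$ large, and therefore contains $\Proj{\Cell}\Chamber_n = \Proj{\Cell}\ChamberAtInfinity$. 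The naturality formula $\Proj{\Retraction{\Apartment}{\ChamberAtInfinity}\Cell}\ChamberAtInfinity = \Retraction{\Apartment}{\ChamberAtInfinity}(\Proj{\Cell}\ChamberAtInfinity)$ is inherited from the corresponding identity in Fact~\ref{fact:buildings}~\eqref{item:projections}, using that $\Retraction{\Apartment}{\ChamberAtInfinity}$ agrees with $\rho_n$ on the bounded region spanned by $\Cell$ and $\Proj{\Cell}\ChamberAtInfinity$ once $n$ is large.

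The main obstacle is the stabilization claim that underlies both parts. Everything hinges on the two inputs above together with the correct interpretation of ``marching out along $S$'': the $\Chamber_n$ must be chosen so that they eventually enter every subsector $S' \subseteq S$, which is why I take them along an interior ray rather than an arbitrary sequence tending to infinity in $S$. Granting this, both parts are a formal passage to the limit from Fact~\ref{fact:buildings}, the only remaining care being to check that each fixed bounded region is reached by a single choice of large $n$ so that the limiting maps are genuinely well defined.
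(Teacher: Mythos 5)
The paper never proves this statement: like Fact~\ref{fact:buildings}, it is collected as a known feature of buildings with \cite{abrbro} as the implicit source, so there is no internal proof to compare yours against. Judged on its own, your proposal is essentially the standard construction from the buildings literature: obtain $\Retraction{\Apartment}{\ChamberAtInfinity}$ and $\Proj{\Cell}\ChamberAtInfinity$ from the finite-center versions in Fact~\ref{fact:buildings} by stabilization along a sector $S$ representing $\ChamberAtInfinity$. Your two quoted inputs (a chamber and a sector lie in a common apartment after passing to a subsector; two apartments with $\ChamberAtInfinity$ in their boundaries share a subsector, and the isomorphism fixing it pointwise is unique) are exactly the nontrivial ingredients, and your insistence that the $\Chamber_n$ march out along an \emph{interior} ray of $S$, so that they eventually enter every subsector, is the correct stabilization condition. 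So the approach is sound and shows the right care at the crucial point.

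Three details are glossed over and should be written out. First, for the composition identity you re-choose the $\Chamber_n$ inside a common subsector of $\Apartment$ and $\Apartment'$; this is legitimate only because the limit map is independent of the chosen sequence. That independence does follow from your own argument --- on any apartment $\Apartment''$ containing a common subsector, the limiting restriction is \emph{the} unique isomorphism $\Apartment'' \to \Apartment$ fixing $\Apartment \intersect \Apartment''$ pointwise, a description that never mentions the $\Chamber_n$ --- but it must be stated, since otherwise the identification of $\Retraction{\Apartment'}{\ChamberAtInfinity}$ with $\lim_n \Retraction{\Apartment'}{\Chamber_n}$ for your re-chosen sequence is unjustified. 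Second, in part~(ii), an apartment containing $\Cell$ with $\ChamberAtInfinity$ in its boundary contains a subsector representing $\ChamberAtInfinity$, but not necessarily a subsector of $S$; to conclude that it eventually contains the $\Chamber_n$ you need your second input to replace that subsector by a common subsector with $\Apartment$, together with the observation that an interior ray of $S$ eventually enters every sector of $\Apartment$ representing $\ChamberAtInfinity$, not merely every subsector of $S$. Third, the claim of stabilization ``uniformly on bounded sets'' is neither needed nor true in general: if $\Space$ is not locally compact, a bounded set can meet infinitely many chambers. What your argument actually gives --- and all that the construction uses --- is stabilization on each chamber and, uniformly, on each apartment whose boundary contains $\ChamberAtInfinity$.
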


In the discussion before Theorem~\ref{thm:convex_sets} we may replace the chamber $\Chamber$ by a chamber at infinity $\ChamberAtInfinity$. The normal condition translates literally: A closed convex subset $\ClosedSet$ of $\Apartment$ \emph{satisfies the normal condition (with respect to $\ChamberAtInfinity$)} if for every $a \in \partial A$ there is an $n_a \in \Link{\ClosedSet}a \intersect \Proj{\Link{}a}\ChamberAtInfinity$ such that $\Link{A}a \subseteq \CBall{}(n_\Point,\pi/2)$. Observation~\ref{obs:apartment_not_distinguished} to Lemma~\ref{lem:normal_condition_hereditary} remain true with $\Chamber$ replaced by $\ChamberAtInfinity$, so we get:

\begin{thm}
\label{thm:convex_sets_euclidean_from_infinity}
Let $\Space$ be a locally compact euclidean building, let $\Apartment \subseteq \Space$ be an apartment and let $\ChamberAtInfinity \subseteq \Infty\Apartment$ be a chamber at infinity. Let $\ClosedSet \subseteq \Apartment$ be closed, polyhedral, convex and assume that $\Infty\ClosedSet \intersect \ChamberAtInfinity \ne \emptyset$ and that $\ClosedSet$ satisfies the normal condition. Then $\tilde{\ClosedSet} \defeq \Retraction{\Apartment}{\ChamberAtInfinity}^{-1}(\ClosedSet)$ is convex.
\end{thm}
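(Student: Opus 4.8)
The plan is to follow the proofs of Theorem~\ref{thm:convex_sets} and Theorem~\ref{thm:convex_sets_euclidean} almost verbatim, exploiting one structural principle: passing to the link of a point converts the chamber at infinity $\ChamberAtInfinity$ into an \emph{honest} (finite) chamber of a spherical building, so that the already-established Theorem~\ref{thm:convex_sets} applies directly in the link. Write $\rho \defeq \Retraction{\Apartment}{\ChamberAtInfinity}$; since $\rho$ is continuous and $\ClosedSet$ is closed, $\tilde{\ClosedSet} = \rho^{-1}(\ClosedSet)$ is closed, and as $\Space$ is a complete \CAT{0}-space, Theorem~\ref{thm:local_convexity_cat0} reduces everything to showing that $\tilde{\ClosedSet}$ is connected and locally convex.

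For local convexity I would argue exactly as in the last paragraphs of the proof of Theorem~\ref{thm:convex_sets}, now in the versions of Observation~\ref{obs:apartment_not_distinguished} through Lemma~\ref{lem:normal_condition_hereditary} for $\ChamberAtInfinity$. By Observation~\ref{obs:apartment_not_distinguished} I may assume a given boundary point $a \in \partial\tilde{\ClosedSet}$ lies in $\Apartment$; then Observation~\ref{obs:retraction_induces_retraction} identifies $\Link{\tilde{\ClosedSet}}a = \Retraction{\overbar{\Apartment}}{\overbar{\Chamber}}^{-1}(\overbar{\ClosedSet})$, where $\overbar{\Apartment} \defeq \Link{\Apartment}a$, $\overbar{\ClosedSet} \defeq \Link{\ClosedSet}a$, and crucially $\overbar{\Chamber} \defeq \Proj{\Link{}a}\ChamberAtInfinity$ is now a finite chamber of the locally compact spherical building $\Link{\Space}a$ by Fact~\ref{fact:euclidean_buildings}~\eqref{item:projections_from_infinity}. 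The hypotheses of Theorem~\ref{thm:convex_sets} are met: $\overbar{\ClosedSet}$ is closed, polyhedral and $\pi$-convex by Proposition~\ref{prop:local_convexity_of_polyhedral_sets}, it satisfies the normal condition by Lemma~\ref{lem:normal_condition_hereditary}, and $n_a \in \overbar{\Chamber} \intersect \overbar{\ClosedSet}$ shows this intersection is nonempty. Thus Theorem~\ref{thm:convex_sets} gives that $\Link{\tilde{\ClosedSet}}a$ is locally convex, and I would upgrade this to $\pi$-convexity via Theorem~\ref{thm:local_convexity} exactly as before: every direction $\gamma \in \Link{\tilde{\ClosedSet}}a$ has distance at most $\pi/2$ from $n_a$, as seen in an apartment containing $\gamma$ and $\overbar{\Chamber}$ (on which the link-retraction is an isometry fixing $n_a$), so the length-metric diameter is at most $\pi$. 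Proposition~\ref{prop:local_convexity_of_polyhedral_sets} then yields local convexity of $\tilde{\ClosedSet}$.

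The one genuinely new ingredient, and the step I expect to be the main obstacle, is connectedness, where the hypothesis $\Infty\ClosedSet \intersect \ChamberAtInfinity \ne \emptyset$ must replace $\ClosedSet \intersect \Chamber \ne \emptyset$ and is used quite differently. Because $\ChamberAtInfinity$ meets $\Infty\ClosedSet$, the convex set $\ClosedSet$ contains a geodesic ray $r$ whose ideal endpoint is an interior direction of $\ChamberAtInfinity$. Given an arbitrary $\AltPoint \in \tilde{\ClosedSet}$, I would choose an apartment $\Apartment'$ with $\AltPoint \in \Apartment'$ and $\ChamberAtInfinity \subseteq \Infty{\Apartment'}$; since $\Apartment$ and $\Apartment'$ share a common subsector asymptotic to $\ChamberAtInfinity$ and a ray to an interior direction is eventually absorbed into any such subsector, a subray of $r$ lies in $\Apartment \intersect \Apartment'$. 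On that subray both $\rho$ and $\Retraction{\Apartment'}{\ChamberAtInfinity}$ restrict to the identity, so the subray lies in $\tilde{\ClosedSet} \intersect \Apartment' \eqdef \ClosedSet'$, which by Observation~\ref{obs:apartment_not_distinguished} is the convex isometric image of $\ClosedSet$. The geodesic in $\Apartment'$ from $\AltPoint$ to a point $q$ of this subray then stays in $\ClosedSet' \subseteq \tilde{\ClosedSet}$ and connects $\AltPoint$ to $q \in \ClosedSet$; as $\ClosedSet$ is itself connected, $\tilde{\ClosedSet}$ is path-connected.

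With $\tilde{\ClosedSet}$ closed, connected and locally convex in the complete \CAT{0}-space $\Space$, Theorem~\ref{thm:local_convexity_cat0} finishes the proof. The delicate points to nail down all sit in the connectedness step and are building-theoretic rather than metric: the existence of an apartment through $\AltPoint$ with $\ChamberAtInfinity$ in its boundary, the common-subsector property of two apartments sharing a chamber at infinity, and the fact that a ray toward an interior direction is eventually contained in a given subsector (which is exactly where the hypothesis $\Infty\ClosedSet \intersect \ChamberAtInfinity \ne \emptyset$ does its work by forcing $\ClosedSet$ to be unbounded toward $\ChamberAtInfinity$). By contrast, the local-convexity half is a faithful transcription of the finite-chamber argument, made legitimate by the single observation that $\Proj{\Link{}a}\ChamberAtInfinity$ is a finite chamber.
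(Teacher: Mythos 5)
Your proposal follows the paper's own proof essentially verbatim: local convexity is obtained from the chamber-at-infinity analogues of Observation~\ref{obs:apartment_not_distinguished}--Lemma~\ref{lem:normal_condition_hereditary} together with Theorem~\ref{thm:convex_sets}, connectedness is obtained by sending a geodesic ray of $\ClosedSet$ asymptotic to $\ChamberAtInfinity$ far enough out to land in the intersection of sectors of $\Apartment$ and $\Apartment'$ and then concatenating two segments, and the conclusion is drawn from Theorem~\ref{thm:local_convexity_cat0}. The only differences are expository: you make explicit details the paper leaves implicit (that $\Proj{\Link{}a}\ChamberAtInfinity$ is an honest chamber of the spherical building $\Link{\Space}a$, and that the ray should be taken toward an interior direction of $\ChamberAtInfinity$ so that it is eventually absorbed into the common subsector), which refines rather than replaces the paper's argument.
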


\begin{proof}
Theorem~\ref{thm:convex_sets} holds analogously to show that $\tilde{\ClosedSet}$ is locally convex. For connectedness let $\Point \in \ClosedSet \intersect \Apartment$ and $\Point' \in \ClosedSet$. Let $\Apartment'$ be an apartment that contains $a'$ and is such that $\ChamberAtInfinity \subseteq \Apartment'\Infty{}$. Let $\gamma$ be a geodesic ray in $\ClosedSet$ with limit point in $\ChamberAtInfinity$.

That $\Apartment$ and $\Apartment'$ contain $\ChamberAtInfinity$ in their boundary means that they contain sectors $S$ respectively $S'$ with $\Infty{S} = \ChamberAtInfinity = S'\Infty{}$. For large enough $t$ we have $\AltPoint \defeq \gamma(t) \in S \intersect S'$. Then $[\Point, \AltPoint] \union [\AltPoint,\Point']$ lies in $\ClosedSet$ and connects $\Point$ to $\AltPoint$.
\end{proof}

\section{Alternative proof of the application}
\label{sec:alternative_proof}

In this section we present an alternative proof of Theorems~\ref{thm:convex_sets_euclidean} (which works analogously for Theorem~\ref{thm:convex_sets_euclidean_from_infinity}). It needs a weaker version of the normal condition and dispenses with the assumption of $\ClosedSet$ being polyhedral.

Before we formulate the weak normal condition, let us reformulate the normal condition. Let $\Apartment$ be a euclidean Coxeter complex and let $\Chamber \subseteq \Apartment$ be a chamber. Recall that if $\Cell \subseteq \Apartment$ is a simplex, then the projection $\Proj{\Cell}\Chamber$ can be characterized as the chamber that is separated from $\Chamber$ by every wall that strictly separates $\Cell$ and $\Chamber$. Therefore a vector $\gamma \in \Link\Point$ points into $\Proj{\Link\Point}\Chamber$ if for every wall containing $\Point$, it points into the halfspace that contains $\Chamber$. The weak normal condition differs from the normal condition by not requiring any more that the anti-normal vector point into $\ClosedSet$. For technical reasons we also formulate the statement for normal vectors rather than anti-normal vectors. The definition is that a closed, convex subset $\ClosedSet \subseteq \Apartment$ satisfies the \emph{weak normal condition} if for every $a \in \partial \ClosedSet$ and every 
wall containing $a$ there is a normal vector to $\ClosedSet$ at $a$ pointing into the halfspace of $H$ that does not contain $\Chamber$.

The reason for formulating the weak normal condition in terms of hyperplanes is the following observation.

\begin{obs}
\label{obs:key_observation}
Let $H^+$ be a (closed) halfspace in euclidean space and let $H = \partial H^+$. Let $\ClosedSet$ be a closed convex set not fully contained in $H^+$. If for every $a \in \partial \ClosedSet \intersect H$ there is a normal vector pointing into $H^+$, then
\[
H^+ \intersect \ClosedSet \subseteq (H \intersect \ClosedSet) + H^\perp \text{ .}
\]
\end{obs}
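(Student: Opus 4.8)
The plan is to recast the conclusion as a statement about orthogonal projection. Write $n$ for the unit vector normal to $H$ pointing into $H^+$ and $p$ for the orthogonal projection of the ambient euclidean space onto $H$. Then $H^\perp = \R n$ and $(H \intersect \ClosedSet) + H^\perp = p^{-1}(H \intersect \ClosedSet)$, so it suffices to prove that $p(x) \in \ClosedSet$ for every $x \in H^+ \intersect \ClosedSet$; since $p(x) \in H$ automatically, this yields $p(x) \in H \intersect \ClosedSet$ and hence $x \in p^{-1}(H \intersect \ClosedSet)$.

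I would argue by contradiction. Suppose $x \in H^+ \intersect \ClosedSet$ but $p(x) \nin \ClosedSet$. Because $\ClosedSet$ is not contained in $H^+$, there is a point $w \in \ClosedSet$ lying strictly on the far side of $H$. If $p(w) = p(x)$, then the segment $[x,w]$ is parallel to $n$ and meets $H$ exactly in $p(x)$, forcing $p(x) \in [x,w] \subseteq \ClosedSet$, a contradiction; so $h \defeq p(w) - p(x) \in H$ is nonzero. The key device is then to pass to the $2$-plane $Q$ through $x$ spanned by $n$ and $h$. By construction $Q$ contains $x$, $w$, $p(x)$ and, crucially, the normal direction $n$; hence $\ClosedSet_Q \defeq \ClosedSet \intersect Q$ is convex, $\ell \defeq H \intersect Q$ is a line, and the restriction of $p$ to $Q$ is the orthogonal projection of $Q$ onto $\ell$. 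In this planar picture $x$ lies above $\ell$, $w$ below it, and $p(x)$ on $\ell$ but outside $\ClosedSet_Q$.

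Consequently $\ClosedSet_Q \intersect \ell$ is a closed convex subset of $\ell$ not containing $p(x)$, so it has an endpoint $m_2$ on the side of $p(x)$, with $p(x)$ lying strictly beyond $m_2$. A neighbourhood argument shows $m_2 \in \partial \ClosedSet$ (an interior point of $\ClosedSet$ would be interior to the slice $\ClosedSet_Q$ within $Q$), and $m_2 \in H$, so the normal condition applies at $m_2$. The heart of the matter is that $n$ is a feasible direction for $\ClosedSet_Q$ at $m_2$: the segments $[m_2,x]$ and $[m_2, q]$ for $q \in \ClosedSet_Q \intersect \ell$ on the opposite side of $m_2$ both lie in $\ClosedSet_Q$, the first pointing up and toward $p(x)$ (positive $n$- and $h$-components, since $x$ sits above $\ell$ at the same $\ell$-coordinate as $p(x)$), the second pointing backward along $\ell$; a positive combination of these two equals $n$. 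Hence every outer normal $\nu$ of $\ClosedSet_Q$ at $m_2$ satisfies $\scaprod{\nu}{n} \le 0$. Since $n \in Q$, any outer normal of $\ClosedSet$ at $m_2$ restricts, via orthogonal projection onto $Q$, to an outer normal of $\ClosedSet_Q$ at $m_2$ with the same $n$-component. This contradicts the hypothesis, which supplies an outer normal at $m_2$ pointing into $H^+$, that is with $\scaprod{\nu}{n} > 0$.

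The main obstacle I anticipate is that the naive approach — sliding straight down from $x$ toward $p(x)$ and taking the first exit point of $\ClosedSet$ — produces a boundary point lying in the open halfspace $H^+$ rather than on $H$, where the normal condition says nothing. The resolution, and precisely the place where the assumption that $\ClosedSet$ is not contained in $H^+$ is used, is the passage to the plane $Q$ through a point $w$ below $H$: this drags the relevant boundary point down onto $H$, where the condition has teeth. I would finally dispose of the degenerate cases directly, namely $h = 0$ (handled above) and a lower-dimensional slice $\ClosedSet_Q$, in which the planar analysis is trivial.
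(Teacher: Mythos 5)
The paper records this statement as an \emph{Observation} and supplies no proof at all, so there is no argument of record to compare yours against; it stands or falls on its own. Its core is sound in the nondegenerate situation: reducing the claim to $p(x) \in \ClosedSet$, slicing along the plane $Q$ spanned by $n$ and $h = p(w)-p(x)$, locating the endpoint $m_2 \in \partial\ClosedSet \cap H$ of $\ClosedSet \cap Q \cap \ell$ nearest $p(x)$, and writing $n$ as a positive combination of the feasible directions $x - m_2$ and $q - m_2$ is all correct. (You can even skip projecting normals into $Q$: since $x, q \in \ClosedSet$, any normal $\nu$ of $\ClosedSet$ at $m_2$ satisfies $\langle \nu, x - m_2\rangle \le 0$ and $\langle \nu, q - m_2\rangle \le 0$, hence $\langle \nu, n\rangle \le 0$ directly.)

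The genuine gap is the case you wave off in your last sentence as ``trivial'': a lower-dimensional slice $\ClosedSet_Q$. If $\ClosedSet \cap Q \cap \ell = \{m_2\}$ there is no $q$, the feasible directions at $m_2$ span only the line through $x$ and $w$, and no bound on $\langle \nu, n\rangle$ follows. This case is not trivial; it is exactly where the statement, read literally, is false. Take $\ClosedSet = \{(t,t) : -1 \le t \le 1\}$ in $\R^2$, with $H$ the horizontal axis and $H^+$ the upper half-plane. Then $\ClosedSet$ is closed, convex, and not contained in $H^+$; since $\ClosedSet$ has empty interior, $\partial\ClosedSet = \ClosedSet$, so $\partial\ClosedSet \cap H$ is the single point $(0,0)$, and there $\nu = (-1,1)$ is a normal vector (it is orthogonal to the supporting line containing $\ClosedSet$) pointing strictly into $H^+$; yet $(1,1) \in H^+ \cap \ClosedSet$ while its projection $(1,0)$ does not lie in $\ClosedSet$, let alone in $(H \cap \ClosedSet) + H^\perp$. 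So the degenerate case cannot be ``disposed of directly'': the Observation is only true under an implicit nondegeneracy hypothesis such as $\operatorname{int}\ClosedSet \ne \emptyset$, which is what Figure~\ref{fig:normal_condition} suggests the authors picture, although it is nowhere imposed in the paper.

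Under that hypothesis your proof can be completed, but by a better choice of $w$, not by inspecting the degenerate slice: take $w$ to be an \emph{interior} point of $\ClosedSet$ lying strictly below $H$ (one exists by sliding from a point of $\ClosedSet \setminus H^+$ toward any interior point). Then $Q$ contains a $2$-disk of $\ClosedSet$ around $w$, so $\ClosedSet_Q$ is two-dimensional, $\ClosedSet_Q \cap \ell$ is a nondegenerate interval, and your $q$ exists. The same choice closes a second, smaller hole: your final contradiction needs the strict reading $\langle \nu, n\rangle > 0$ of ``pointing into $H^+$'', whereas the paper later uses the condition only in the weak form $(m,n) \ge 0$ (see the proof of Proposition~\ref{prop:ascending_goes_on}, and note that Corollary~\ref{cor:normal_condition_preserved} produces only the non-strict inequality). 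If $\langle \nu, n\rangle = 0$ in your estimate, then $\nu$ is orthogonal to both $x - m_2$ and $q - m_2$, hence to all of $Q$; but $Q$ contains the interior point $w$, and a nonzero normal at $m_2$ must be strictly negative on the direction $w - m_2$. So with $w$ interior, every normal at $m_2$ satisfies $\langle \nu, n\rangle < 0$ strictly, and the hypothesis is contradicted under either reading.
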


\begin{rem}
Observation~\ref{obs:key_observation} implies that it suffices to check the normal condition along the two walls that are closest to $\Chamber$ in their parallelity class. In Figure~\ref{fig:normal_condition} these are drawn in blue.
\end{rem}

In what follows we will denote by $\ClosedSet + \varepsilon$ the closed set of all points at distance at most $\varepsilon$ from $\ClosedSet$.

\begin{cor}
\label{cor:normal_condition_preserved}
Let $\Apartment$ be a euclidean Coxeter complex, let $\Chamber \subseteq \Apartment$ be a chamber, and let $\ClosedSet$ be a closed, convex set such that $\ClosedSet \intersect \Chamber \ne \emptyset$. If $\ClosedSet$ satisfies the weak normal condition, then $\ClosedSet + \varepsilon$ satisfies the weak normal condition for every $\varepsilon > 0$.
\end{cor}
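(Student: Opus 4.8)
The plan is to show that the weak normal condition is preserved under taking the $\varepsilon$-neighborhood by reducing to a local, wall-by-wall verification, using Observation~\ref{obs:key_observation} as the geometric engine. Write $\ClosedSet_\varepsilon \defeq \ClosedSet + \varepsilon$. Since $\ClosedSet$ is closed and convex, so is $\ClosedSet_\varepsilon$ (the $\varepsilon$-neighborhood of a convex set is convex), and clearly $\ClosedSet_\varepsilon \intersect \Chamber \ne \emptyset$ since $\ClosedSet \subseteq \ClosedSet_\varepsilon$. So the task is exactly to verify the weak normal condition for $\ClosedSet_\varepsilon$: for every boundary point $b \in \partial \ClosedSet_\varepsilon$ and every wall $H$ through $b$, there is a normal vector to $\ClosedSet_\varepsilon$ at $b$ pointing into the halfspace $H^-$ of $H$ \emph{not} containing $\Chamber$.

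\textbf{Reduction to a single wall.} First I would fix a wall $H$ (with its closed halfspaces $H^+ \supseteq \Chamber$ and $H^-$) and let $b \in \partial \ClosedSet_\varepsilon \intersect H$. The key point is to relate the outward normals of $\ClosedSet_\varepsilon$ at $b$ to those of $\ClosedSet$. Every boundary point $b$ of $\ClosedSet_\varepsilon$ has a unique nearest point $a \in \partial \ClosedSet$, at distance exactly $\varepsilon$, and the normal cone to $\ClosedSet_\varepsilon$ at $b$ coincides with the normal cone to $\ClosedSet$ at $a$ (this is the standard fact that the outer normals of a convex body and its parallel body agree at corresponding points). So it suffices to produce, for the point $a$ and the wall $H$, a normal vector to $\ClosedSet$ pointing into $H^-$, provided $a$ lies in $H$. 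If $a$ already lies on $H$, the weak normal condition for $\ClosedSet$ at $a$ supplies such a vector directly and we are done. The genuine work is therefore confined to the case where the nearest point $a$ does \emph{not} lie on $H$.

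\textbf{Using Observation~\ref{obs:key_observation}.} This is where the halfspace splitting does the work. Consider the halfspace $H^-$ (the one avoiding $\Chamber$). If $\ClosedSet$ is not contained in $H^+$ and satisfies the hypothesis of Observation~\ref{obs:key_observation} along $H$ — namely at every $a' \in \partial \ClosedSet \intersect H$ there is a normal vector pointing into $H^-$, which is precisely what the weak normal condition gives — then
\[
H^- \intersect \ClosedSet \subseteq (H \intersect \ClosedSet) + H^\perp .
\]
I would use this inclusion to analyze points $b \in \partial \ClosedSet_\varepsilon \cap H$ whose nearest point $a$ lies off $H$: the inclusion forces the part of $\ClosedSet$ on the $H^-$ side to be ``flat'' against $H$ in the $H^\perp$ direction, which pins down the geometry of $\partial\ClosedSet_\varepsilon$ near $H$ and lets me exhibit the required normal. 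In the degenerate case where $\ClosedSet \subseteq H^+$, the neighborhood $\ClosedSet_\varepsilon$ meets $H$ only in a controlled ``rounded'' region, and a normal there points into $H^-$ essentially by the sphere geometry of the $\varepsilon$-ball, so the condition holds for trivial reasons.

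\textbf{Main obstacle.} The delicate step is the off-wall case: relating a normal of $\ClosedSet_\varepsilon$ at a point \emph{on} $H$ to a normal of $\ClosedSet$ at its nearest point $a$ lying strictly on the $\Chamber$-side $H^+$. Here one must check that the normal direction inherited from $a$ still points into $H^-$ rather than being tilted toward $\Chamber$. The inclusion furnished by Observation~\ref{obs:key_observation} is exactly the tool that rules out the unfavorable tilt, by constraining how $\partial\ClosedSet$ can approach $H$ from the $H^-$ side; making this geometric implication precise — and handling the boundary case where $a$ projects onto the relative boundary of $H \intersect \ClosedSet$ — is the crux of the argument.
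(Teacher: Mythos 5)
Your proposal shares the paper's starting point --- project a boundary point $b$ of $\ClosedSet + \varepsilon$ to its nearest point $a \in \ClosedSet$ and bring in Observation~\ref{obs:key_observation} --- but the step that is supposed to do the work rests on a false claim, and the actual crux is left unexecuted. It is not true that the normal cone of $\ClosedSet + \varepsilon$ at $b$ coincides with the normal cone of $\ClosedSet$ at $a$: the former is exactly the single ray spanned by $v \defeq b - a$ (the closed $\varepsilon$-ball around $a$ is contained in $\ClosedSet + \varepsilon$ and touches its boundary at $b$, so there is only one supporting hyperplane there), whereas the latter can be much bigger --- take $\ClosedSet$ to be a single point, whose normal cone is the whole space, while $\ClosedSet + \varepsilon$ is a round ball with a unique outward normal at each boundary point. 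Hence your reduction ``it suffices to produce a normal vector to $\ClosedSet$ at $a$ pointing into $H^-$'' is invalid: the weak normal condition for $\ClosedSet + \varepsilon$ at $b$ is a statement about the one direction $v$, and exhibiting some other element of the normal cone of $\ClosedSet$ at $a$ proves nothing about $v$. This also undercuts your treatment of the case $a \in H$; the conclusion there is true, but for the trivial reason that $v$ then lies in the direction space of $H$ and hence weakly points into both halfspaces, not because the weak normal condition for $\ClosedSet$ supplies anything.

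The deeper problem is that your case analysis aims at the wrong case, and the case that matters is never handled. Since $b \in H$, the statement that $v = b - a$ points into the halfspace avoiding $\Chamber$ is equivalent to the statement that $a$ does not lie in the open halfspace avoiding $\Chamber$. So when $a$ lies in the closed $\Chamber$-side halfspace --- precisely the situation you single out as the ``delicate crux'' --- there is nothing at all to prove. The entire content of the corollary is to rule out that $a$ lies strictly on the non-$\Chamber$ side, and this is where the Observation enters, via a nearest-point contradiction that your sketch never formulates: apply it to the halfspace avoiding $\Chamber$ (its non-containment hypothesis is automatic because $\ClosedSet \intersect \Chamber \ne \emptyset$ and the open chamber meets no wall; note your hypothesis check is inverted, since you require non-containment in the $\Chamber$-side halfspace and treat containment there as a degenerate case, when in fact that case is trivial). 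If $a$ lay strictly on the far side, the inclusion provided by the Observation would force the orthogonal projection of $a$ onto $H$ to lie in $H \intersect \ClosedSet \subseteq \ClosedSet$, and by Pythagoras (using $b \in H$) that projection is strictly closer to $b$ than $a$ is, contradicting that $a$ is the nearest point of $\ClosedSet$ to $b$. This short argument is essentially the paper's whole proof; in your proposal it is exactly the part deferred as ``making this geometric implication precise,'' so the proposal as written does not establish the corollary.
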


\begin{proof}
Let $x \in \partial(\ClosedSet + \varepsilon)$ and let $a$ be its projection to $\ClosedSet$. Then $v = x-a$ is the (unique up to scaling) normal vector of $\ClosedSet + \varepsilon$ at $x$.

Assume that $x$ lies in a wall $H$ and let $H^+$ denote the closed halfspace that does not contain $\Chamber$. To show that $v$ points into $H^+$ means to show that $a$ does not lie in the interior of $H^+$. But this follows from the Observation because otherwise the projection of $a$ to $H$ would lie in $A$ and be closer to $x$, a contradiction.
\end{proof}

Returning to the setting of the theorem, let $\Space$ be a euclidean building, $\Apartment$ be an apartment, and let $\rho$ be the retraction onto $\Apartment$ centered at a chamber $\Chamber$.
If $\gamma$ is a geodesic, then $\rho \circ \gamma$ is a piecewise linear path. More precisely it is locally a geodesic on chambers and may or may not be locally a geodesic where it hits a wall.

Let $\ClosedSet \subseteq \Apartment$ be a closed, convex set. We call a geodesic $\gamma \colon [0,l] \to \Space$ \emph{ascending} if $d_\ClosedSet \circ \rho \circ \gamma$ is strictly ascending where $d_\ClosedSet(x) = d(x,\ClosedSet)$.

Note that whether or not $\gamma$ is ascending can be determined using linear
algebra: turn $\Apartment$ into a euclidean vector space by picking $\rho \circ
\gamma(t)$ as origin. Let $m$ be the vector pointing away from $\ClosedSet$.
Note that this is the normal vector at $\rho \circ \gamma(t)$ of $\ClosedSet +
d_\ClosedSet(\rho\circ\gamma(t))$. Let $v$ and $w$ be the incoming
and outgoing tangent vectors to $\rho \circ \gamma$ at $t$. Then $\gamma$ is
ascending on a neighborhood of $t$ if and only if both $v$ and $w$ include a
nonobtuse angle with $m$, i.e.\ $(v,m),(w,m) \ge 0$.

\begin{prop}
\label{prop:ascending_goes_on}
Let $\Space$, $\Apartment$, $\rho$, $\ClosedSet$ be as above and assume that $\ClosedSet \intersect \Chamber \ne \emptyset$. If $\gamma \colon [0,l] \to \Space$ is ascending, then so is every geodesic (ray) $\bar{\gamma}$ extending it.
\end{prop}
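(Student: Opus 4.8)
The plan is to prove that the single real function
\[
f \defeq d_\ClosedSet \circ \rho \circ \bar{\gamma}
\]
is \emph{convex} on the whole (possibly infinite) domain of $\bar\gamma$. Granting this, the conclusion is immediate: $f$ is strictly increasing on $[0,l]$ by hypothesis, so its left derivative at $l$ is positive; convexity then makes the right derivative nondecreasing, hence positive, on all of $[l,\infty)$, so $f$ is strictly increasing there as well, i.e. $\bar\gamma$ is ascending. Thus the entire problem reduces to the convexity of $f$.

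Since $\bar\gamma$ is a geodesic, $\rho\circ\bar\gamma$ is a unit–speed path that is affine on each chamber and bends only where it meets a wall. On each affine piece, $f$ is the composition of the convex function $d_\ClosedSet$ (distance to a convex subset of the euclidean space $\Apartment$) with an affine map, hence convex, so its right derivative is nondecreasing there. A continuous function whose right derivative is nondecreasing is convex, so it remains only to check that the right derivative of $f$ does not drop at a bend, i.e. that \emph{the slope jumps upward at every fold} (where $\rho\circ\bar\gamma$ crosses a wall without bending there is nothing to prove).

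Fix a fold at parameter $t_0$, put $p \defeq \rho\circ\bar\gamma(t_0)$, lying on a wall $H$, and let $v,w$ be the incoming and outgoing velocities of $\rho\circ\bar\gamma$ at $t_0$. Where $f(p)>0$ the gradient $m \defeq \nabla d_\ClosedSet(p)$ is the unit outward normal to $\ClosedSet + d_\ClosedSet(p)$ at $p$ (the ``vector pointing away from $\ClosedSet$'' of the preceding discussion), and it is continuous, so the one–sided derivatives of $f$ at $t_0$ are exactly $(v,m)$ and $(w,m)$; I must show $(w,m)\ge(v,m)$. Two geometric inputs combine to give this. First, the retraction bends $\rho\circ\bar\gamma$ by reflecting its velocity in $H$ and always toward the side away from $\Chamber$: writing $n_H$ for the unit normal of $H$ pointing into the halfspace $H^+$ that does not contain $\Chamber$, this folding–away–from–$\Chamber$ property (a consequence of the projection and gallery–distance behaviour in Fact~\ref{fact:buildings}) gives $w=r_H(v)$ with $(v,n_H)\le 0\le (w,n_H)$, where $r_H$ is the reflection in $H$. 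Second — and this is where the weak normal condition (in force in the setting of the theorem) enters — the enlarged set $\ClosedSet + d_\ClosedSet(p)$ again satisfies the weak normal condition by Corollary~\ref{cor:normal_condition_preserved}, and applied at the boundary point $p$ on the wall $H$ this says precisely that its unique outward normal $m$ points into $H^+$, i.e. $(m,n_H)\ge 0$. Therefore
\[
(w,m)-(v,m) = \left(r_H(v)-v,\,m\right) = -2\,(v,n_H)\,(m,n_H)\ge 0,
\]
the required upward jump.

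The main obstacle is this last step, and within it the honest verification of the two sign conditions: that the retraction genuinely folds away from $\Chamber$ (yielding $(v,n_H)\le 0$), and that the weak normal condition is inherited by $\ClosedSet + d_\ClosedSet(p)$ and produces $(m,n_H)\ge 0$ — it is here that the hypothesis $\ClosedSet \intersect \Chamber \neq \emptyset$ is used, namely through Corollary~\ref{cor:normal_condition_preserved} (which rests on Observation~\ref{obs:key_observation}). A secondary technical point is that a fold may occur where several walls meet; there $p$ lies on more than one wall and the bend is a composition of reflections, so one applies the estimate to each wall through $p$ in turn (the weak normal condition holds at each of them) or argues by approximation with generic wall crossings.
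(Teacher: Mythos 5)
Your proof is correct, and its computational core is in fact identical to the paper's: at a bend of $\rho \circ \bar{\gamma}$ across a wall $H$ the paper writes $w = v + \lambda n$ with $\lambda > 0$ (for unit vectors this is exactly your $w = r_H(v)$ with $(v,n) \le 0$), gets $(m,n) \ge 0$ from the weak normal condition via Corollary~\ref{cor:normal_condition_preserved}, and concludes $(m,w) = (m,v) + \lambda (m,n) \ge (m,v)$ --- the same upward slope jump you establish. The difference is the global packaging. The paper propagates the hypothesis along the ray: at the first wall-crossing past $l$ it combines $(m,v) \ge 0$ (ascending) with the jump to get $(m,w) \ge 0$, then continues, treating non-generic geodesics (those whose image meets codimension-2 cells) by an approximation argument at the end. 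You instead prove the hypothesis-free and stronger statement that $f = d_\ClosedSet \circ \rho \circ \bar{\gamma}$ is convex for \emph{every} geodesic, and deduce the proposition formally. This buys a cleaner limiting step: convexity passes to pointwise limits, whereas strict monotonicity does not --- the paper's closing limit argument is delicate precisely on this point, since a limit of ascending geodesics is a priori only nondecreasing. Two small things to patch in your write-up: first, at a bend with $f(t_0) = 0$ your gradient $m$ is undefined, but there the jump inequality is automatic, since $f \ge 0 = f(t_0)$ forces $f'_-(t_0) \le 0 \le f'_+(t_0)$; second, your first option for bends on codimension-2 cells (composing reflections wall by wall) is not obviously legitimate, because the intermediate vectors would need the sign condition $(v_i, n_{H_{i+1}}) \le 0$ at each successive wall --- use your second option, approximation by generic geodesics, which is exactly the paper's device and which your convexity formulation handles especially well.
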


\begin{proof}
Assume first that $\rho \circ \bar{\gamma}$ meets no cells of codimension $2$ and is not contained in a wall.

Let $t \ge l$ be minimal such that $\bar{\gamma}(t)$ lies in a wall $H$. Let
$\delta > 0$ be sufficiently small that $\bar{\gamma}(t-\delta,t+\delta)$ meets
no other wall. If $\Apartment'$ is an apartment that contains $\Chamber$ and
$\bar{\gamma}(t-\delta,t]$, then $\rho = \rho \circ
\Retraction{\Apartment'}{\Chamber}$. Therefore we may as well assume that
$\bar{\gamma}(t-\delta,t] \subseteq \Apartment$. It suffices to show that
$\bar{\gamma}(t-\delta,t+\delta)$ is ascending.

To this end let $n$ be the normal vector in $\Apartment$ of $H$ pointing away from $\Chamber$. Let $m$ be the vector in $\Apartment$ pointing away from $\ClosedSet$. By the weak normal condition and Corollary~\ref{cor:normal_condition_preserved} $(m,n) \ge 0$. Let $v$ be the incoming and $w$ the outgoing tangent vector to $\rho \circ \bar{\gamma}$ at $t$. We know that $(m,v) \ge 0$ because $\gamma$ is ascending.

Now either $w=v$ and there is nothing to show. Or $w = v + \lambda n$ for some
$\lambda > 0$ and $(m,w) = (m,v) + (m,n) \ge 0$. Thus also in that case
$\bar{\gamma}$ is ascending on a neighborhood of $t$ finishing the proof under
the initial assumptions.

If $\bar{\gamma}$ is a general geodesic, then there is a sequence $\gamma_i$ of
geodesics meeting these assumptions and converging to $\bar{\gamma}$. Moreover,
the $\gamma_i$ can be chosen so that an initial segment of each $\gamma_i$ is
contained in a chamber that also contains an initial segment of $\bar{\gamma}$.
They are therefore initially ascending and therefore ascending by the previous
discussion. It follows that the limit $\bar{\gamma}$ is ascending.
\end{proof}

\begin{thm}
\label{thm:strong_convex_sets_euclidean}
Let $\Space$ be a euclidean building, let $\Apartment \subseteq \Space$ be an apartment and let $\Chamber \subseteq \Apartment$ be a chamber. Let $\ClosedSet \subseteq \Apartment$ be closed and convex and assume that $\ClosedSet \intersect \Chamber \ne \emptyset$ and that $\ClosedSet$ satisfies the weak normal condition. Then $\tilde{\ClosedSet} \defeq \Retraction{\Apartment}{\Chamber}^{-1}(\ClosedSet)$ is convex.
\end{thm}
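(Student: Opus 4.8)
The plan is to reduce convexity of $\tilde{\ClosedSet}$ to a one-dimensional monotonicity statement along geodesics and then feed it into Proposition~\ref{prop:ascending_goes_on}. Since $\Space$ is a euclidean building it is \CAT{0}, hence uniquely geodesic, and $\tilde{\ClosedSet} = \Retraction{\Apartment}{\Chamber}^{-1}(\ClosedSet)$ is closed because $\rho \defeq \Retraction{\Apartment}{\Chamber}$ is continuous and $\ClosedSet$ is closed. Thus it suffices to fix $\Point,\AltPoint \in \tilde{\ClosedSet}$, take the geodesic $\gamma \colon [0,l] \to \Space$ joining them, and show $\gamma([0,l]) \subseteq \tilde{\ClosedSet}$. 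Writing $f \defeq d_\ClosedSet \circ \rho \circ \gamma$ with $d_\ClosedSet(x) = d(x,\ClosedSet)$, membership $\gamma(t) \in \tilde{\ClosedSet}$ is equivalent to $f(t) = 0$, and the assumption $\Point,\AltPoint \in \tilde{\ClosedSet}$ gives the boundary values $f(0) = f(l) = 0$. The goal becomes $f \equiv 0$.

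Assume not, so $f(t^*) > 0$ for some $t^*$. The idea is to locate a subinterval on which $\gamma$ is \emph{ascending} in the sense of the preceding discussion and then invoke Proposition~\ref{prop:ascending_goes_on}: once $\gamma$ is ascending it stays ascending on every extension, so $f$ would be strictly increasing all the way out to $l$, forcing $f(l) > 0$ and contradicting $f(l) = 0$. To produce such a subinterval I would set $s \defeq \sup\{t \le t^* : f(t) = 0\}$, so that $f(s) = 0$ and $f > 0$ on $(s,t^*]$. The key observation is that $\rho \circ \gamma$ is piecewise linear, and on each linear piece $f$ is the composition of the convex function $d_\ClosedSet$ (distances to a convex set in a euclidean apartment are convex) with an affine map, hence convex. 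On the linear piece emanating from $s$ the function $f$ is therefore convex, vanishes at $s$, and is positive immediately to the right; a nonnegative convex function with these properties is strictly increasing there, since its right derivative at $t$ is at least $f(t)/(t-s) > 0$. Hence $\gamma$ is ascending on a small interval, and after moving the left endpoint slightly rightward we may take a starting point $s_0 \in (s,t^*]$ with $f(s_0) > 0$ at which $\gamma$ is still ascending.

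The main obstacle is exactly this last point. Proposition~\ref{prop:ascending_goes_on} is stated for geodesics that are already ascending, and the notion of ascending implicitly requires the base point to lie outside $\ClosedSet$ so that the vector $m$ pointing away from $\ClosedSet$ is defined; the piecewise convexity of $f$ is precisely what guarantees that once the distance leaves $0$ it does so with strictly positive slope, rather than merely touching $0$ and returning, and hence that there is a genuine ascending subinterval on which $f > 0$. Granting this, applying Proposition~\ref{prop:ascending_goes_on} to $\gamma|_{[s_0,s_0+\delta]}$ shows that $\gamma|_{[s_0,l]}$ is ascending, whence $f(l) > f(s_0) > 0$, the desired contradiction. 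Therefore $f \equiv 0$, the geodesic lies in $\tilde{\ClosedSet}$, and $\tilde{\ClosedSet}$ is convex. Note that this argument uses neither local compactness nor polyhedrality of $\ClosedSet$, unlike Theorem~\ref{thm:convex_sets_euclidean}, because all of the analytic content is absorbed into Proposition~\ref{prop:ascending_goes_on} together with the convexity of $d_\ClosedSet$.
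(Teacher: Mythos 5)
Your proposal is correct and takes essentially the same route as the paper: fix a geodesic with endpoints in $\tilde{\ClosedSet}$, observe that if it left $\tilde{\ClosedSet}$ it would be ascending on some subinterval, and then invoke Proposition~\ref{prop:ascending_goes_on} to contradict the endpoint lying in $\tilde{\ClosedSet}$. Your convexity argument (convexity of $d_\ClosedSet$ composed with the linear pieces of $\rho \circ \gamma$, giving strictly positive slope once the distance leaves $0$, and then shifting the starting point to $s_0$ with $f(s_0) > 0$) is a careful justification of the one step the paper asserts without proof, namely the existence of an ascending subinterval based at a point outside $\ClosedSet$.
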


\begin{proof}
Let $\rho \defeq \Retraction{\Apartment}{\Chamber}$. Let $\gamma \colon [0,l] \to \Space$ be a geodesic with $\rho(\gamma(0)) \in \ClosedSet$ and $\rho(\gamma(l)) \in \ClosedSet$. If there was a $t \in [0,l]$ with $\rho(\gamma(t)) \nin \ClosedSet$, then $\gamma$ would be ascending on some subinterval. But in view of Proposition~\ref{prop:ascending_goes_on} this would contradict $\rho(\gamma(l)) \in \ClosedSet$.
\end{proof}

\providecommand{\bysame}{\leavevmode\hbox to3em{\hrulefill}\thinspace}
\providecommand{\MR}{\relax\ifhmode\unskip\space\fi MR }
% \MRhref is called by the amsart/book/proc definition of \MR.
\providecommand{\MRhref}[2]{%
  \href{http://www.ams.org/mathscinet-getitem?mr=#1}{#2}
}
\providecommand{\href}[2]{#2}

\end{document}